\newtheorem{thm}{Theorem}[section]
\newtheorem{lem}[thm]{Lemma}
\theoremstyle{defn}
\newtheorem{defn}[thm]{Definition}
\numberwithin{equation}{section}
\newcommand{\R}{\mathbb{R}}
\newcommand{\bgs}[1]{\begin{equation*} \begin{aligned}#1\end{aligned}\end{equation*}} 
\newcommand{\syslab}[2] []  {\begin{equation}#1  \left\{\begin{aligned}#2\end{aligned}\right.\end{equation}}
\renewcommand{\d}{\, \mathrm{d}} 
\newcommand{\eps}{\varepsilon} 
\def\Xint#1{\mathchoice
{\XXint\displaystyle\textstyle{#1}}%
{\XXint\textstyle\scriptstyle{#1}}%
{\XXint\scriptstyle\scriptscriptstyle{#1}}%
{\XXint\scriptscriptstyle%
\scriptscriptstyle{#1}}%
\!\int}
\def\XXint#1#2#3{{\setbox0=\hbox{$#1{#2#3}{%
\int}$ }
\vcenter{\hbox{$#2#3$ }}\kern-.6\wd0}}
\def\barint{\,\Xint -} 
\def\bariint{\barint_{} \kern-.4em \barint}
\def\bariiint{\bariint_{} \kern-.4em \barint}
\renewcommand{\iint}{\int_{}\kern-.34em \int} 
\renewcommand{\iiint}{\iint_{}\kern-.34em \int} 
\newtheorem*{remark}{Remark} 
\title[Asymptotic mean value properties for space-time fractional parabolic operators]{Asymptotic mean value formulas, nonlocal space-time parabolic operators and anomalous tug-of-war games}
\address{Carmina Fjellstr\"{o}m\\Department of Mathematics, Uppsala University\\
S-751 06 Uppsala, Sweden}
\email{carmina.fjellstrom@math.uu.se}
\address{Kaj Nystr\"{o}m\\Department of Mathematics, Uppsala University\\
S-751 06 Uppsala, Sweden}
\email{kaj.nystrom@math.uu.se}
\address{Yuqiong Wang\\Department of Mathematics, Uppsala University\\
S-751 06 Uppsala, Sweden}
\email{yuqiong.wang@math.uu.se}
\thanks{K.N was partially supported by grant  2017-03805 from the Swedish research council (VR)}
\author{C. Fjellstr\"{o}m,  K. Nystr{\"o}m and Y. Wang}
\keywords{}
\date{\today}
\begin{document}
\begin{abstract}
The fractional heat operator  $(\partial_t-\Delta_x)^s$ and  Continuous Time Random Walks (CTRWs) are interesting and sophisticated mathematical models that can describe complex anomalous systems. In this paper, we  prove asymptotic mean value representation formulas for functions with respect to $(\partial_t-\Delta_x)^s$ and
we introduce new nonlocal, nonlinear parabolic operators related to a tug-of-war which accounts for waiting times and space-time couplings. These nonlocal, nonlinear parabolic operators and equations can be seen as nonlocal versions of the evolutionary infinity Laplace operator.
\end{abstract}

\maketitle

\section{Introduction}

Recently, there has  been a surge in the study of the fractional Laplacian $(-\Delta_x)^s$, as well as of more general linear and nonlinear fractional operators, as mathematicians and scientists have been exploring fractional calculus as a tool for developing more sophisticated mathematical models that can accurately describe complex anomalous systems. In particular, the fractional Laplacian has been used in place of the standard Laplacian in many applications in finance, biology, the sciences and engineering. It is well known that the fractional Laplacian in $\mathbb R^n$ can be defined in many equivalent ways connecting the fractional Laplacian to singular integrals and harmonic analysis,  generalized Dirichlet to Neumann operators, degenerate elliptic equations and spectral theory, and the theory of L\'{e}vy/stable processes. We refer to \cite{lischke} for a recent and comprehensive survey of these connection and applications.

One natural parabolic extension of $(-\Delta_x)^s$ is the parabolic operator $\partial_t+(-\Delta_x)^s$ which appears, for example, in the study of stable processes and in option pricing models, see \cite{caffarelli2007} and the references therein. Other generalization are the time-fractional diffusion operators $\partial_t^\beta+(-\Delta_x)$ and $\partial_t^\beta+(-\Delta_x)^s$ being the sum of a fractional and nonlocal time derivative as well as a local or nonlocal operator in space. These operators have attracted considerable  interest during the last years, mostly due to their applications in the modelling
of anomalous diffusion, see for example \cite{KSVZ}, \cite{KSZ}, and  the references therein.

Decisive progress in the study of several problems involving the operator $(-\Delta_x)^s$ has been achieved through an extension technique, rediscovered in \cite{caffarelli2007}, based on which the fractional Laplacian can be studied through a local but degenerate elliptic operator having degeneracy determined by an $A_2$-weight. In \cite{NS}, the second author, together with O. Sande, established the parabolic analogue of  the result in \cite{caffarelli2007}  by proving that if $s\in (0,1)$, then the nonlocal fractional heat operator $(\partial_t- \Delta_x )^{s}$ can be realized as a Dirichlet to Neumann map for the (local) linear degenerate parabolic operator $\lambda^{1-2s}\partial_t-\nabla_{\lambda,x}\cdot(\lambda^{1-2s}\nabla_{\lambda,x})$,  see also \cite{B-DLC-S,GFT, LN,Stinga-Torrea-SIAM}. As one consequence, local properties of solutions to $( \partial_t- \Delta_x )^{s}u=0$ in a domain $\Omega\times J\subset\mathbb R^n\times \mathbb R$ can be deduced by developing corresponding results for linear degenerate parabolic equations. Formally, the linear degenerate parabolic equation  introduced in  \cite{NS} coincides, when applied to functions which are independent of time, with the linear degenerate elliptic equation introduced in \cite{caffarelli2007}.

The purpose of this paper is to develop certain asymptotic mean value representation formulas reflecting the kernel of the operator $\mathcal{H}^s$, where $\mathcal{H}:=(\partial_t-\Delta_x)$ and to introduce  new nonlocal, nonlinear parabolic operators $\mathcal H^{s,\pm}_\infty$ related to a tug-of-war which accounts for waiting times and space-time couplings. We also study the asymptotic properties of these operators as $s\nearrow 1$.

Our interest in the operator $\mathcal{H}^s$ stems not only from the interesting analysis of these operators and their favourable connections to (local) linear degenerate parabolic operators, but also from the relevance of these operators to the study of anomalous diffusions. Indeed, from a probabilistic point of view,  fractional powers of parabolic operators like the heat operator is closely connected to Continuous Time Random Walks (CTRWs), see \cite{Metzler-Klafter} for a more physically oriented introduction and \cite{MB15,BMM05,BMS04,MS06,Meerschaert2012,MS14} for mathematical treatments. CTRWs are a kind of stochastic {process} in which random jumps in space are {coupled} with random waiting times. In general, the scaling limit of  CTRWs are time-changed Markov processes in $\mathbb R^n$ and the clock process is the hitting time of an increasing L\'{e}vy process. This is in contrast to operators like $\partial_t+(-\Delta_x)^s$ and $\partial_t^\alpha +(-\Delta_x)^s$, where the jumps in space are {independent} of the waiting times.

The introduction of the operators $\mathcal H^{s,\pm}_\infty$ is partially  inspired by \cite{bjor}, where the authors consider,  motivated by the tug-of-war game introduced and studied in \cite{PSSW}, a nonlocal version of the game which loosely can be defined as follows: at every step, two players respectively pick a direction, and then, instead of flipping a coin in order to decide which direction to choose and then moving a fixed amount $\epsilon>0$ (as is done in the classical case), it is a $s$-stable L\'{e}vy process that decides the direction (either the one picked by player I or the one picked by player II) and the distance to travel. In this context, they introduce what they call the infinity fractional Laplacian $\Delta_\infty^s$, which is the nonlocal, nonlinear operator underlying the game.

In contrast to \cite{bjor}, we in this paper instead consider a space-time coupled tug-of-war game loosely defined as follows: two players play a zero-sum game and, at each step of the game, the players pick directions $y\in S^{n}$ and $z\in S^{n}$ on the unit sphere, respectively. Then a random waiting time $\tau$, and a one-dimensional position change $\eta$, are drawn from a distribution $K_{1,s}(\eta,\tau)$ conditioned on the event that $(\eta,\tau)\in (\mathbb R^{n}\times\mathbb R_+)\setminus C_\epsilon^+(0,0)$, where $C_\epsilon^+(0,0)$ is the upper half of the (parabolic) cylinder centered at (0,0) with radius $\epsilon$ in space and  height $2\epsilon^2$ in time. At $t+\tau$, the token is moved in space as $x\to x+\eta y$ if $\eta>0$ and as $x\to x+\eta z$ if $\eta<0$.  By the Dynamic Programming Principle (DPP), and heuristically, the fair value of the game will satisfy $\mathcal H^{s,+}_\infty u=0$ in the limit as $\epsilon\to 0$.

It is interesting to consider the limit of the operators introduced as $s\nearrow 1$ and we prove that
\begin{align}\label{lim1}
     \lim_{s\nearrow 1}\mathcal{H}^{s}u(x,t)&=(\partial_t-\Delta_x)u(x,t),
          \end{align}
          and if $\nabla_xu(x,t)\neq 0$, then
\begin{align}\label{lim2}
     \lim_{s\nearrow 1}\mathcal{H}_\infty^{s,\pm}u(x,t)&=(\partial_t\pm\Delta_{\infty,x}^N) u(x,t)\notag\\
     &:=\partial_tu(x,t)\pm|\nabla_xu(x,t)|^{-2}\langle \nabla^2_xu(x,t)\nabla_xu(x,t),\nabla_xu(x,t)\rangle.
          \end{align}
          Here, $\Delta_{\infty,x}^N$ is the (normalized) infinity Laplace operator occurring in the context of the tug-of-war game introduced and studied in \cite{PSSW}, and it is interesting to note that in \cite{manfredipr10c}, it is proved that a function $u=u(x,t):\mathbb R^n\times \mathbb R\to \mathbb R$ is a viscosity solution to the equation
\begin{equation}\label{meanvalue1heatgha}
\partial_tu(x,t)-\Delta_{\infty,x}^Nu(x,t)=0,
\end{equation}
if and only if
\begin{align}\label{meanvalue3aheatha}
u(x,t)&=\frac 1 2\barint_{t-\epsilon^2}^t\biggl ( \biggl\{\max_{\tilde x\in\overline{{B_\epsilon(x)}}}u(\tilde x,\tilde t)\biggr \}+\biggl\{\min_{\tilde x\in\overline{{B_\epsilon(x)}}}u(\tilde x,\tilde t)\biggr \}\biggr )\d \tilde t+{o}(\epsilon^2),\mbox{ as }\epsilon\to 0,
\end{align}
in the viscosity sense. Through the DPP, this clearly connects the tug-of-war game in \cite{PSSW} to asymptotic mean value formulas.


The paper is organized as follows. In Section \ref{sec0}, we introduce the fractional heat operator $\mathcal{H}^s$, and briefly discuss the connection to the extension problem and continuous time random walks (CTRWs). In this section, we also state the asymptotic mean value formulas in Theorem \ref{thm1-} and Theorem \ref{thm1a}, and we discuss probabilistic interpretations of these results. We introduce the nonlocal, nonlinear parabolic operators $\mathcal H^{s,\pm}_\infty$ in Section \ref{sec1}, as well as state the asymptotic mean value formula for these operators in Theorem \ref{thm1aba}. In Section \ref{sec2}, we detail the viscosity formalism for $\mathcal{H}^s$, while in Section \ref{sec3}, we prove Theorem \ref{thm1-} and Theorem \ref{thm1a}, where the proof of Theorem \ref{thm1-} is included for completion. In Section \ref{sec4}, we give a heuristic derivation of $\mathcal{H}^{s,\pm}_\infty$ and
introduce an appropriate notion of viscosity solutions. In Section \ref{sec5}, we prove Theorem \ref{thm1aba}, and in Section \ref{sec6}, we examine the limits of the operators as $s\nearrow 1$, where we prove that $\mathcal H^{s}$ does indeed converge to the heat operator $(\partial_t-\Delta_x)$, and $\mathcal H^{s,\pm}_\infty$ converges to the forward/backward evolutionary normalized infinity Laplace operator. Finally, in Section \ref{sec7}, we outline open problems and future research topics devoted to the operators $\mathcal H^{s,\pm}_\infty$, which we hope will stimulate further developments.


\section{The fractional heat operator}\label{sec0}

For $s\in (0,1)$, the fractional heat operator $\mathcal{H}^su(x,t)$
of a function $u=u(x,t):\R^{n+1}\to\R$, $n\geq1$, is given as
\begin{align}\label{eq0}\widehat{\mathcal{H}^su}(\xi,\tau):=(i\tau+|\xi|^2)^s\widehat{u}(\xi,\tau),\ \widehat{u}(\xi,\tau):=\iint_{\mathbb R^{n+1}}e^{i(x\cdot\xi+t\tau)}u(x,t)\, \d x\d t,
\end{align}
for $(\xi,\tau)\in\R^n\times \R$. Using  \cite{Sa}, it follows that if $u\in\mathcal{S}(\mathbb R^{n+1})$, then $\mathcal{H}^su$ can be realized as a parabolic hypersingular integral,
\begin{align}\label{eq1}
     \mathcal{H}^s u(x,t)&=\iint_{\mathbb R^{n+1}} {(u(x,t)-u(x+w,t-\tau))}K_{n,s}(w,\tau)\, \d w\d \tau\notag\\
     &=\iint_{\mathbb R^{n+1}} {(u(x,t)-u(x+w,t+\tau))}K_{n,s}(w,-\tau)\, \d w\d \tau,
     \end{align}
     with kernel
 \begin{eqnarray}\label{eq2}
   K_{n,s}(w,\tau):=K_{n,s}(w,\tau,0,0),\  K_{n,s}(x,t,y,\gamma):=\frac {1}{\Gamma(-s)}\frac {W_{n}(x,t,y,\gamma)}{(t-\gamma)^{1+s}}.
     \end{eqnarray}
Here,
  \begin{eqnarray}\label{eq2+}
    W_n(x,t,y,\gamma):=\frac {1}{(4\pi(t-\gamma))^{n/2}}e^{-\frac{|x-y|^2}{(4(t-\gamma))}}\chi_{(0,\infty)}(t-\gamma),
     \end{eqnarray}
and  $\Gamma(-s)$ is the gamma function evaluated at $-s$. Note that $W_n(x,t,y,\gamma)$ is the heat (transition) kernel in $\mathbb R^n$. Using the symmetry of the kernel $K_{n,s}$, we also have  the representation
\begin{align}\label{eq1asymp+intr}
     \mathcal{H}^s u(x,t)&=\frac 1 2 \iint_{\mathbb R^{n+1}} {(2u(x,t)-u(x+w,t+\tau)-u(x-w,t+\tau))}K_{n,s}(w,-\tau)\, \d w\d \tau,
\end{align}
and we note that by the definition of $K_{n,s}(w,-\tau)$, only points $(w,\tau)\in\mathbb R^n\times \mathbb R_-$ contribute to the integral.

\subsection{The fractional heat operator and the extension problem}
In \eqref{eq0} and \eqref{eq1}, $\mathcal{H}^s$ is introduced through the Fourier transform and identified with a singular integral operator. \eqref{eq1} can also be formulated in a slightly different manner if we introduce
    \begin{equation}\label{eq3}
{P}_\tau u(x,t):=\int\limits_{\mathbb R^n}W_n(x,\tau,w,0)u(w,t-\tau)\, \d w, \ \tau>0.
\end{equation}
Then, by adapting the ideas
first introduced by Balakrishnan \cite{balakrishnan1959,balakrishnan1960} for the fractional powers of closed operators on Banach spaces, $\mathcal{H}^s$ can also be expressed as
        \begin{eqnarray}\label{eq4-}
\mathcal{H}^s u(x,t):= -\frac s{\Gamma(1-s)}\int\limits_0^\infty \tau^{-1-s}\bigl (P_\tau u(x,t)-u(x,t)\bigr )\,  {d\tau}.
    \end{eqnarray}
This representation in \eqref{eq4-} is interesting as it is the vehicle based on which a (local) linear degenerate parabolic equation associated with $\mathcal{H}^s$ can be
    introduced.  Indeed, the essence of Theorem 1 in \cite{NS} is that if we, for $\lambda>0$, introduce
          \begin{eqnarray}\label{extension}
U(\lambda,x,t):=\frac {1}{2^{2s}\Gamma(s)}\lambda^{2s}\int\limits_0^\infty \frac 1{\tau^{1+s}}e^{-\frac {\lambda^2}{4\tau}}P_\tau u(x,t)\, \d\tau,
    \end{eqnarray}
    where ${P}_\tau u(x,t)$ was introduced in \eqref{eq3}, then
    \begin{equation}
\partial_\lambda(\lambda^{1-2s}\partial_\lambda{U})(\lambda,x,t) =\lambda^{1-2s} \mathcal{H}{U}(\lambda,x,t),\ {U}(0,x,t)  = u(x,t),\  (\lambda,x,t)\in \mathbb R_+\times\mathbb R^n\times \R,
\label{DN2+}
\end{equation}
and
\begin{align}\label{DN1+}
 -\lim\limits_{\lambda \rightarrow 0^+} \lambda^{1-2s} \partial_\lambda {U}(\lambda,x,t)=c_s\mathcal{H}^{s}u (x,t), \qquad (x,t)\in\R^n\times\R.
\end{align}
In particular, $\mathcal{H}^s$ can be realized as a Dirichlet to Neumann map using an extension problem, now for an associated (local) linear degenerate parabolic equation. As one consequence, local properties of solutions to $\mathcal{H}^{s}u=0$ in a domain $\Omega\times J\subset\mathbb R^n\times \mathbb R$ can be deduced by developing corresponding results for linear degenerate parabolic equations. Note that if ${U}$ is independent of $t$, then formally, the equation in \eqref{DN2+} coincides with the equation in the extension problem considered by Caffarelli and Silvestre \cite{caffarelli2007}.

\subsection{Continuous
time random walks (CTRWs)}
The kernel $K_{n,s}$ in \eqref{eq2} can also be given a  probabilistic interpretation  rooted in the context of continuous
time random walks (CTRWs).  To briefly discuss this, let $J_1,\ J_2,\ ...$ be nonnegative independent and identically distributed (i.i.d.) random variables that model the waiting
times between jumps of a particle that moves in $\mathbb R^n$. Set $T (0) := 0$ and let $T (m) := \sum_{j=1}^m J_j$ denote the time of the $m$-th jump. The particle jumps are given by i.i.d. random vectors $Y_1$, $Y_2$,... on  $\mathbb R^n$. Let $S(0) := 0$ and let $S(m) := \sum_{i=1}^m Y_i$ denote the position of the particle after the $m$-th
jump. For $t\geq 0$, let
$$N_t = \max\{m\geq 0 : T (m)\leq t\},$$
be the number of jumps up to time $t$. Define the stochastic process $\{X(t)\}_{t\geq 0}$
by
$$X(t) := S(N_t) =\sum_{i=1}^{N_t} Y_i.$$
Then $X(t)\in\mathbb R^n$ is the position of the particle at time $t$. $\{X(t)\}_{t\geq 0}$ is called a CTRW. Let $D$ be a stable subordinator with $E[e^{-\tau D}]=\exp(-\tau ^s)$ and assume that the (conditional) distribution for $A\in\mathbb R^n$, given $D=t$, is normal with mean 0 and variance $2t$. Then
$$E(e^{i\langle k,A\rangle})=E(E(e^{i\langle k,A\rangle}|D))=E(e^{-|k|^2D})=e^{-|k|^{2s}},$$
and hence  $A$ is stable with index $2s$. Let $a_m:=m^{-1/(2s)}$, $b_m:=m^{-1/s}$. Then by \cite{BMS04},
$$(a_mS(m),b_mT(m))\Rightarrow (A,D),$$
where $\Rightarrow $ denotes convergence in distribution. Furthermore, the limit $(A,D)$ has L\'{e}vy measure
$$\psi(\d x,\d t):=\frac 1{(4\pi t)^{n/2}}e^{-|x|^2/(4t)} \d x\bar\phi(\d t)=W_n(x,t,0,0)\d x\bar\phi(\d t),$$
where $\bar\phi$ is the L\'{e}vy measure for $D$. In particular, the Fourier-Laplace transform $(\mathcal{F}\mathcal{L})$ of the joint probability density $P_{(A,D)}$ can be calculated, and
$$(\mathcal{F}\mathcal{L})P_{(A,D)}(k,\tau)=e^{-\psi(k,\tau)},$$
where the log-characteristic function $\psi$ equals
$$\psi(k,\tau)=(\tau+|k|^2)^s.$$
This implies, as can be seen by inverting the Fourier-Laplace transform,  that the density  $f_\tau$ of $(A(\tau),D(\tau))$ equals  $K_{n,s}(x,-\tau)$ where $K_{n,s}$ is defined in \eqref{eq2}.


\subsection{Asymptotic mean value formulas} Given $(x,t)\in\mathbb R^{n+1}$, $r>0$, we let $C_r(x,t):=B(x,r)\times(t-r^2,t+r^2)$, where $B(x,r)$ is the standard open Euclidean ball in $\mathbb R^n$ centered at $x$ and with radius $r$. We also let $C_r^+(x,t):=B(x,r)\times(t,t+r^2)$ and $C_r^-(x,t):=B(x,r)\times(t-r^2,t)$.

Recall the following asymptotic mean value formula for the heat equation, the proof of which can be found in Section \ref{sec3}.

\begin{thm}\label{thm1-}
    Let $D\subset\mathbb R^{n+1}$ be an open set and let $u\in C^{2,1}(D)$.  Then $u$ solves the heat equation
    \begin{eqnarray}\label{solv1+}
        (\partial_t-\frac{1}{n+2} \Delta_x)u(x,t)=0\mbox { in }D,
    \end{eqnarray}
    if and only if the asymptotic mean value formula
    \begin{align}\label{meanvalue3+ha}
        u(x,t)&=\bariint_{C_\epsilon^-(x,t)} u(y,s)\, \d y\d s +{o}(\epsilon^{2}) \mbox{ as }\epsilon\to 0,
    \end{align}
    holds for every $(x,t)\in D$.
\end{thm}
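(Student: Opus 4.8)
The plan is to establish both implications simultaneously by a parabolic Taylor expansion of $u$ about the vertex $(x,t)$ of the half-cylinder $C_\epsilon^-(x,t)$, followed by an explicit computation of the cylinder averages.

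I would fix $(x,t)\in D$ and pick $\epsilon_0>0$ with $\overline{C_{\epsilon_0}^-(x,t)}\subset D$. For $0<\epsilon<\epsilon_0$ and $(y,s)\in C_\epsilon^-(x,t)$, write $y=x+h$ with $|h|<\epsilon$ and $s=t-\sigma$ with $0<\sigma<\epsilon^2$. Since $u\in C^{2,1}(D)$, Taylor's theorem — second order in $x$, first order in $t$, using the continuity (hence local uniform continuity) of $\nabla_x^2 u$ and $\partial_t u$ near $(x,t)$ — gives
\begin{equation*}
u(x+h,t-\sigma)=u(x,t)+\nabla_x u(x,t)\cdot h-\partial_t u(x,t)\,\sigma+\tfrac12\langle\nabla_x^2 u(x,t)h,h\rangle+R(h,\sigma),
\end{equation*}
with $R(h,\sigma)=o(|h|^2+\sigma)$ as $(h,\sigma)\to(0,0)$; since $|h|^2+\sigma\le 2\epsilon^2$ on $C_\epsilon^-(x,t)$, this forces $\sup_{C_\epsilon^-(x,t)}|R|=o(\epsilon^2)$ as $\epsilon\to 0$.

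Next I would average this identity over $C_\epsilon^-(x,t)=B(x,\epsilon)\times(t-\epsilon^2,t)$. By the odd symmetry of $B(0,\epsilon)$ the linear term and the off-diagonal quadratic terms $\barint_{B(0,\epsilon)} h_i h_j$ ($i\neq j$) integrate to zero, while
\begin{equation*}
\barint_{t-\epsilon^2}^{t}\sigma\,\d s=\frac{\epsilon^2}{2},\qquad \barint_{B(0,\epsilon)}h_i^2\,\d h=\frac{\epsilon^2}{n+2},\quad 1\le i\le n.
\end{equation*}
Substituting yields
\begin{equation*}
\bariint_{C_\epsilon^-(x,t)}u(y,s)\,\d y\,\d s-u(x,t)=-\frac{\epsilon^2}{2}\Bigl(\partial_t u(x,t)-\frac{1}{n+2}\Delta_x u(x,t)\Bigr)+o(\epsilon^2).
\end{equation*}

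From this single identity the theorem follows. If $u$ solves \eqref{solv1+}, the bracket vanishes at every point and \eqref{meanvalue3+ha} holds; conversely, if \eqref{meanvalue3+ha} holds at $(x,t)$, then dividing by $-\epsilon^2/2$ and letting $\epsilon\to0$ gives $(\partial_t-\frac{1}{n+2}\Delta_x)u(x,t)=0$, and since $(x,t)\in D$ was arbitrary, $u$ solves \eqref{solv1+}. I expect the only genuinely delicate point to be the claim that the Taylor remainder averages to $o(\epsilon^2)$: this rests on the uniform control of $R$ over $C_\epsilon^-(x,t)$ furnished by continuity of the derivatives of $u$ on a compact neighbourhood of $(x,t)$. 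The remaining steps, including the computation of the constant $1/(n+2)$ that pins down the coefficient in \eqref{solv1+}, are routine.
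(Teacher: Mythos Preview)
Your proof is correct and follows essentially the same approach as the paper: Taylor-expand $u$ at $(x,t)$, average over $C_\epsilon^-(x,t)$, kill the odd terms by symmetry, and compute the two surviving averages to arrive at the identity
\[
\bariint_{C_\epsilon^-(x,t)}u(y,s)\,\d y\,\d s-u(x,t)=-\frac{\epsilon^2}{2}\Bigl(\partial_t u(x,t)-\frac{1}{n+2}\Delta_x u(x,t)\Bigr)+o(\epsilon^2),
\]
from which both implications follow immediately. Your treatment of the remainder (uniform $o(\epsilon^2)$ control via continuity of $\nabla_x^2 u$ and $\partial_t u$ on a compact neighbourhood) is in fact slightly more explicit than the paper's, which simply quotes Taylor's formula.
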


Note that the factor $1/(n+2)$ in \eqref{solv1+} can be changed to $1$ at the expense of changing the domain of integration in time. We can give a probabilistic interpretation related to the formula in \eqref{meanvalue3+ha} as follows. We consider a random walk such that at each step,  a random waiting time $\tau$ and a change of the position $w$ of the token are drawn from the uniform distribution on $C_\epsilon^+(0,0)$. Heuristically, we can then use the dynamic programming principle to formulate an integral equation whose solution represents the expected value $v_\epsilon(x,t)$ of the game  starting at $(x,t)$, and 
\begin{align}\label{meanvalue3+hal}
v_\epsilon(x,t)&=\bariint_{C^+_\epsilon(x,t)} v_\epsilon(y,s)\, \d y\d s.
\end{align}
Note that, first, in this probabilistic interpretation, space and time are uncoupled, and second, compared to the  asymptotic mean value formula in Theorem \ref{thm1-}, integration in \eqref{meanvalue3+hal} is forward in time and the formula in \eqref{meanvalue3+hal} is exact, i.e., there is no $o(\epsilon^{2})$ term.

We next state  a version of Theorem \ref{thm1-}, but with $\mathcal{H}$ replaced by $\mathcal{H}^s$. To formulate the result we let
\begin{align}\label{eq1asymp1+}
    \mathcal{M}_\epsilon^s u(x,t)&:=\frac 1 2\kappa(n,s)\epsilon^{2s}\iint_{(\mathbb R^{n}\times\mathbb R_-)\setminus C_\epsilon^-(0,0)}{(u(x+w,t+\tau)+u(x-w,t+\tau))}K_{n,s}(w,-\tau)\, \d w\d \tau,
\end{align}
where
\begin{align}\label{eq1asymp2+}
    \kappa(n,s):=\biggl (\iint_{(\mathbb R^{n}\times\mathbb R_+)\setminus C_1^+(0,0)}K_{n,s}(w,\tau)\, \d w\d \tau\biggr )^{-1}.
\end{align}
A calculation shows that $\kappa(n,s)$ is finite.

We prove the following theorem. We refer to Section \ref{sec2} for the notion of  viscosity solutions for $\mathcal{H}^s$.


\begin{thm}\label{thm1a}
Let $D\subset\mathbb R^{n+1}$ be an open set and let $u\in C(D)\cap L^\infty(\mathbb R^{n+1})$.  Then $u$ is a viscosity solution to
\begin{eqnarray}\label{solv2+}
  \mathcal{H}^s u(x,t)=0\mbox { in }D,
\end{eqnarray}
if and only if the asymptotic mean value formula
\begin{align}\label{FHeat_meanvalue3+ha}
    u(x,t)&=\mathcal{M}_\epsilon^s u(x,t)+\mathcal O(\epsilon^{2}),\mbox{ as }\epsilon\to 0,
\end{align}
holds for every $(x,t)\in D$ in the viscosity sense.
\end{thm}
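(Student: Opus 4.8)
The plan is to prove the two implications by testing the definition of viscosity solution against the asymptotic mean value formula, following the standard template for such equivalences but adapted to the parabolic nonlocal setting. First I would observe that, by the scaling built into the normalization constant $\kappa(n,s)$ in \eqref{eq1asymp2+}, for a fixed smooth test function $\phi$ touching $u$ from above (resp.\ below) at $(x_0,t_0)$, one has
\begin{align*}
\phi(x_0,t_0)-\mathcal{M}_\epsilon^s\phi(x_0,t_0)
&=\tfrac12\kappa(n,s)\epsilon^{2s}\iint_{(\R^n\times\R_-)\setminus C_\epsilon^-(0,0)}\bigl(2\phi(x_0,t_0)-\phi(x_0+w,t_0+\tau)-\phi(x_0-w,t_0+\tau)\bigr)K_{n,s}(w,-\tau)\,\d w\d\tau.
\end{align*}
The key technical point is to compare the right-hand side with $\mathcal{H}^s\phi(x_0,t_0)$, which by \eqref{eq1asymp+intr} is exactly the same expression but integrated over all of $\R^n\times\R_-$ and without the $\epsilon^{2s}$ prefactor. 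So I would split the integral into the contribution from inside $C_\epsilon^-(0,0)$ and the contribution from outside. On the inside piece, a second-order Taylor expansion of $\phi$ in $x$ and a first-order expansion in $t$ (using $\phi\in C^{2,1}$, which is exactly the regularity allowed for test functions in the viscosity formalism of Section \ref{sec2}) shows that the symmetrized second difference in space is $O(|w|^2)$ and the time increment is controlled, so that piece is $O(\epsilon^{2s})\cdot o(1)$ or contributes a term of size $o(\epsilon^2)$ after multiplication by $\epsilon^{2s}\kappa(n,s)^{-1}\cdots$ — one must check the moment $\iint_{C_\epsilon^-}|w|^2 K_{n,s}(w,-\tau)\,\d w\d\tau$ and the time-moment are of the right order in $\epsilon$, which is a routine parabolic-scaling computation using the explicit Gaussian kernel $W_n$.

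The main identity I would then extract is that
\begin{align*}
\epsilon^{-2}\bigl(\phi(x_0,t_0)-\mathcal{M}_\epsilon^s\phi(x_0,t_0)\bigr)
=\tfrac12\kappa(n,s)\,\epsilon^{2s-2}\cdot\epsilon^2\cdot\bigl(\mathcal{H}^s\phi(x_0,t_0)+o(1)\bigr),
\end{align*}
so that the sign of $\phi(x_0,t_0)-\mathcal{M}_\epsilon^s\phi(x_0,t_0)$ for small $\epsilon$ is governed by the sign of $\mathcal{H}^s\phi(x_0,t_0)$, with the error absorbed into the $\mathcal{O}(\epsilon^2)$ in \eqref{FHeat_meanvalue3+ha} once one divides appropriately. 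Here I need to be careful that $\phi$ only touches $u$ locally: away from $(x_0,t_0)$ one replaces $\phi$ by $u$ and uses $u\le\phi$ (resp.\ $\ge$) together with $u\in L^\infty(\R^{n+1})$ to control the far-field tail of the integral, exactly as in the standard argument that $\mathcal{H}^s$ is well-defined on bounded functions that are $C^{2,1}$ near a point. This gives: if $u$ is a viscosity subsolution (supersolution) then the mean value inequality holds in the viscosity sense, and conversely.

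For the converse direction I would run the same computation in reverse: assuming the asymptotic formula \eqref{FHeat_meanvalue3+ha} holds in the viscosity sense at $(x_0,t_0)$ for a test function $\phi$, the displayed asymptotics force $\mathcal{H}^s\phi(x_0,t_0)\le 0$ (resp.\ $\ge 0$), which is precisely the defining inequality for a viscosity solution of \eqref{solv2+}. The one genuine obstacle, and the step I expect to require the most care, is the uniformity of the $o(1)$ (equivalently, the $\mathcal{O}(\epsilon^2)$) error as $\epsilon\to 0$: because the kernel $K_{n,s}(w,-\tau)$ is only a fractional-order density in time and the region $(\R^n\times\R_-)\setminus C_\epsilon^-(0,0)$ has a $\tau\to 0^-$ singularity, one must verify that the truncated moments $\iint_{(\R^n\times\R_-)\setminus C_\epsilon^-}|w|^2K_{n,s}(w,-\tau)\,\d w\d\tau$ and $\iint_{(\R^n\times\R_-)\setminus C_\epsilon^-}|\tau|K_{n,s}(w,-\tau)\,\d w\d\tau$ both scale like $\epsilon^{2-2s}$ (so that multiplying by $\kappa(n,s)\epsilon^{2s}$ produces exactly the order-$\epsilon^2$ behaviour matching the heat operator), and that the finiteness of $\kappa(n,s)$ asserted after \eqref{eq1asymp2+} is what makes the far-field contribution harmless. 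Once these parabolic kernel estimates are in hand the equivalence follows by the routine viscosity-solution bookkeeping.
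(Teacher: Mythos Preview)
Your approach is the paper's: establish the pointwise identity
\[
u-\mathcal{M}_\epsilon^s u=\kappa(n,s)\,\epsilon^{2s}\,\mathcal{H}^s u+\mathcal{O}(\epsilon^2)
\]
for functions that are locally $C^{2,1}$ and globally bounded, by splitting the singular integral into $C_\epsilon^-(0,0)$ and its complement, and then transfer to the viscosity setting via the hybrid test function $v=v_{U,\phi,u}$. Two points need correction.

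First, your final paragraph places the moment estimate on the wrong region. The integrals
\[
\iint_{(\R^n\times\R_-)\setminus C_\epsilon^-}|w|^2K_{n,s}(w,-\tau)\,\d w\d\tau
\quad\text{and}\quad
\iint_{(\R^n\times\R_-)\setminus C_\epsilon^-}|\tau|\,K_{n,s}(w,-\tau)\,\d w\d\tau
\]
are both \emph{infinite} for $s\in(0,1)$: integrate in $w$ first (the Gaussian has second moment $\sim|\tau|$), and one is left with $\int_{\epsilon^2}^\infty \tau^{-s}\,\d\tau=\infty$. No exterior moment bound is needed or available; that region is precisely where $\mathcal{M}_\epsilon^s$ lives, and one does not Taylor-expand there. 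The remainder lives on the \emph{interior} $C_\epsilon^-(0,0)$, as you correctly said earlier. The paper makes this clean by rescaling $(w,\tau)\mapsto(\epsilon w,\epsilon^2\tau)$ first, so the error becomes
\[
I_\epsilon=\iint_{C_1^-(0,0)}\delta_\epsilon(u,(x,t),(w,\tau))\,K_{n,s}(w,-\tau)\,\d w\d\tau,
\qquad |\delta_\epsilon|\le c\,\epsilon^2(|w|^2+|\tau|),
\]
and then $|I_\epsilon|\le c\,\epsilon^2\iint_{C_1^-}(|w|^2+|\tau|)K_{n,s}(w,-\tau)\,\d w\d\tau$, the last integral being a fixed finite constant. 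This gives $\mathcal{O}(\epsilon^2)$ directly, which is what the theorem requires; your displayed identity only delivers $o(\epsilon^{2s})$, which is strictly weaker when $s<1$.

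Second, no comparison via ``$u\le\phi$'' is needed. Both Definition~\ref{Vissolagain} and Definition~\ref{meanvalueviscosity} are formulated for the \emph{same} hybrid $v=v_{U,\phi,u}$, so once the smooth identity above holds for $v$, the equivalence is immediate: $\mathcal{H}^s v(\hat x,\hat t)\ge 0$ if and only if $v(\hat x,\hat t)\ge \mathcal{M}_\epsilon^s v(\hat x,\hat t)+\mathcal{O}(\epsilon^2)$, by reading off signs and, for the converse, dividing by $\kappa(n,s)\epsilon^{2s}$ and letting $\epsilon\to 0$.
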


We can also give a probabilistic interpretation related to  Theorem \ref{thm1a} as follows. We consider a random walk such that at each step,  a random waiting time $\tau$ and a change of the position $w$ of the token are drawn from the distribution $K_{n,s}(w,\tau)$ conditioned on the event that $(w,\tau)\in (\mathbb R^{n}\times\mathbb R_+)\setminus C_\epsilon^+(0,0)$. Then, by dynamic programming principle, the expected value $v_\epsilon(x,t)$ of the game  starting at $(x,t)$ equals
\begin{align}\label{meanvalue3+hall}
v_\epsilon(x,t)&= \kappa(n,s)\epsilon^{2s}\iint_{(\mathbb R^{n}\times\mathbb R_+)\setminus C_\epsilon^+(0,0)}v_\epsilon(x+w,t+\tau)K_{n,s}(w,\tau)\, \d w\d \tau\notag\\
&=\frac 1 2\kappa(n,s)\epsilon^{2s}\iint_{(\mathbb R^{n}\times\mathbb R_+)\setminus C_\epsilon^+(0,0)}{(v_\epsilon(x+w,t+\tau)+v_\epsilon(x-w,t+\tau))}K_{n,s}(w,\tau)\, \d w\d \tau,
\end{align}
by the symmetry of the kernel with respect to $w$. Note that space and time are now coupled, and once again,  compared to the  asymptotic mean value formula in Theorem \ref{thm1a}, integration in \eqref{meanvalue3+hall} is forward in time and the formula in
\eqref{meanvalue3+hall} is exact, i.e. there is no $\mathcal O(\epsilon^{2})$ term.

\section{The nonlocal, nonlinear parabolic operators $\mathcal H^{s,\pm}_\infty$ and space-time tug-of-wars}\label{sec1}

Partially inspired  by \cite{bjor}, we here introduce, and this is the main innovation presented in this paper, a new class of nonlocal and nonlinear operators: the forward and backward infinity fractional heat operators $\mathcal H^{s,\pm}_\infty$. In this section, we let $S^n$ denote the unit sphere in $\mathbb R^n$.

\begin{defn}\label{IFLdefn}
For $s\in (0,1)$, $(x,t)\in\mathbb R^{n+1}$, the forward and backward infinity fractional heat operators $\mathcal H^{s,\pm}_\infty$ at a point $(x,t)$ are defined in the following way. Consider $u\in C^{2,1}(C_{2\delta}(x,t))\cap L^\infty(\mathbb R^{n+1})$ for some $\delta>0$.  If $\nabla_xu(x,t)\neq 0$, then
\begin{align}
\mathcal H^{s,\pm}_\infty u(x,t):=\frac{1}{2}\iint_{\mathbb R\times\mathbb R} [2u(x,t)-u(x+|\eta| v,t+\tau)-u(x-|\eta|v,t+\tau)]\,K_{1,s}(\eta,\pm\tau)\d\eta\d\tau
\notag
\end{align}
where $v=\nabla_xu(x,t)/|\nabla_xu(x,t)|\in S^{n}$. If $\nabla_xu(x,t)=0$, then
\begin{align}
\mathcal H^{s,\pm}_\infty u(x,t):&=\frac{1}{2}\sup_{y\in S^{n}}\left\{\iint_{\mathbb R\times\mathbb R} [u(x,t)-u(x+|\eta| y,t+\tau)]\,K_{1,s}(\eta,\pm\tau)\d\eta\d\tau\right\}\notag\\
&+\frac{1}{2}\inf_{z\in S^{n}}\left\{\iint_{\mathbb R\times\mathbb R} [u(x,t)-u(x-|\eta| z,t+\tau)]\,K_{1,s}(\eta,\pm\tau)\d\eta\d\tau\right\}.
\end{align}
\end{defn}

Given
$y\in S^{n}$, we also introduce
\begin{align}\label{eq1asymp1+haetto1}
\mathcal{M}_{\epsilon,\pm}^{s,y} u(x,t):= \kappa(1,s)\epsilon^{2s}\left\{\iint_{(\mathbb R\times\mathbb R)\setminus C_\epsilon(0,0)} u(x+|\eta|y,t+\tau)\,K_{1,s}(\eta,\pm\tau)\d\eta\d\tau\right\},
\end{align}
and given
$y\in S^{n}$ and $z\in S^{n}$,
\begin{align}\label{eq1asymp1+ha}
\mathcal{M}_{\epsilon,\pm}^{s,y,z} u(x,t):=\frac 1 2\mathcal{M}_{\epsilon,\pm}^{s,y} u(x,t)+\frac 1 2\mathcal{M}_{\epsilon,\pm}^{s,-z} u(x,t).
\end{align}

\begin{defn}\label{IFLdefn+}
For $s\in (0,1)$, $(x,t)\in\mathbb R^{n+1}$, the mean value  operators $\mathcal{M}_{\epsilon,\pm}^{s,\infty}$ at a point $(x,t)$ are defined in the following way. Consider $u\in C^{2,1}(C_{2\delta}(x,t))\cap L^\infty(\mathbb R^{n+1})$ for some $\delta>0$.  If $\nabla_x u(x,t)\neq 0$, then
\begin{align}
\mathcal{M}_{\epsilon,\pm}^{s,\infty}u(x,t):=\mathcal{M}_{\epsilon,\pm}^{s,v,v} u(x,t),\notag
\end{align}
where $v=\nabla_x u(x,t)/|\nabla_x u(x,t)|\in S^{n}$. If $\nabla_x u(x,t)=0$, then
\begin{align}
\mathcal{M}_{\epsilon,\pm}^{s,\infty} u(x,t):=\sup_{y\in S^{n}}\inf_{z\in S^{n}}\mathcal{M}_{\epsilon,\pm}^{s,y,z}  u(x,t).
\end{align}
\end{defn}

Using the notions introduced, we prove the following theorem. Note that Theorem \ref{thm1aba} is stated for the backwards infinity fractional heat operator in order to stay consistent with the previous theorems. The theorem, however, also holds for the forward operator. We refer to Section \ref{viscosity_formalism_ifho} for the notion of  viscosity solutions for $ \mathcal H^{s,\pm}_\infty$.
\begin{thm}\label{thm1aba}
Let $D\subset\mathbb R^{n+1}$ be an open set, let $u\in C(D)\cap L^\infty(\mathbb R^{n+1})$ and $s\in (0,1)$.  Then $u$ is a viscosity solution to
\begin{eqnarray}\label{solv3+}
  \mathcal H^{s,-}_\infty u(x,t)=0\mbox { in }D,
\end{eqnarray}
if and only if the asymptotic mean value formula
\begin{align}\label{IFHeat_meanvalue3+ha}
u(x,t)&=\mathcal{M}_{\epsilon,-}^{s,\infty} u(x,t)+\mathcal O(\epsilon^{2}),\mbox{ as }\epsilon\to 0,
\end{align}
holds for every $(x,t)\in D$ in the viscosity sense.
\end{thm}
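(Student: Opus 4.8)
The plan is to follow the standard viscosity-solution scheme for asymptotic mean value formulas, adapting the argument used for Theorem \ref{thm1a} to the $\infty$-operator. First, I would establish the pointwise asymptotics: for $\varphi \in C^{2,1}$ near $(x,t)$ with $\nabla_x \varphi(x,t) \neq 0$, one expands $\varphi(x \pm |\eta| v, t+\tau)$ in a Taylor polynomial in the spatial variable (second order) and in time (first order) around $(x,t)$. The odd-in-$\eta$ first-order spatial terms cancel in the symmetrized combination $2\varphi(x,t) - \varphi(x+|\eta|v,t+\tau) - \varphi(x-|\eta|v,t+\tau)$, leaving the second-order term $-|\eta|^2 \langle \nabla_x^2 \varphi\, v, v\rangle$ plus the time term $-2(\varphi(x,t+\tau)-\varphi(x,t))$, which to leading order is $-2\tau\,\partial_t\varphi(x,t)$. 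Integrating against $\kappa(1,s)\epsilon^{2s} K_{1,s}(\eta,-\tau)$ over $(\mathbb{R}\times\mathbb{R})\setminus C_\epsilon(0,0)$ and using the homogeneity/scaling of $K_{1,s}$ together with the definition of $\kappa(1,s)$, one obtains
\begin{align*}
\varphi(x,t) - \mathcal{M}_{\epsilon,-}^{s,\infty}\varphi(x,t) = c\,\epsilon^{2}\,\mathcal{H}^{s,-}_\infty\varphi(x,t) + o(\epsilon^{2}),
\end{align*}
for a harmless positive constant $c$ coming from the second moments of the kernel, exactly as in the derivation in Section \ref{sec5}. The key scaling observation is that $\mathcal{M}_{\epsilon,\pm}^{s,v,v}$ is built precisely so that the $\epsilon^{2s}$ normalization converts the hypersingular integral over the complement of $C_\epsilon$ into $\epsilon^2$ times the full operator value, up to lower order.

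Second, I would handle the degenerate case $\nabla_x\varphi(x,t)=0$. Here the $\sup_{y}\inf_{z}$ in the definition of $\mathcal{M}_{\epsilon,\pm}^{s,\infty}$ must be shown to produce, in the limit, the $\sup_{y}\inf_{z}$ structure of $\mathcal{H}^{s,-}_\infty$ at a vanishing-gradient point. Since the first-order spatial term $|\eta|\langle\nabla_x\varphi(x,t), y\rangle$ vanishes identically, the argument of the $\sup$/$\inf$ is $\langle \nabla_x^2\varphi(x,t) y, y\rangle$ at leading order (for the $y$-part) plus the common time term; uniform control of the Taylor remainder over $y,z \in S^n$ (compactness of the sphere) lets one pass the limit inside sup and inf. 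This step mirrors the treatment of the degenerate case for the evolutionary infinity Laplacian and I expect it to be routine but notationally heavy.

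Third, with the pointwise asymptotics in hand, I would run the usual two implications in the viscosity sense. For the ``only if'' direction: if $u$ is a viscosity subsolution and $\varphi$ touches $u$ from above at $(x,t)$, then (after the standard reduction handling the case $\nabla_x\varphi(x,t)=0$ via the definition of viscosity solution for $\mathcal{H}^{s,-}_\infty$ from Section \ref{viscosity_formalism_ifho}) the asymptotic expansion applied to $\varphi$, combined with $u \leq \varphi$ near $(x,t)$ and $u(x,t)=\varphi(x,t)$ and the monotonicity of $\mathcal{M}_{\epsilon,-}^{s,\infty}$ in its argument, yields $u(x,t) \geq \mathcal{M}_{\epsilon,-}^{s,\infty} u(x,t) - o(\epsilon^2)$ — i.e. the mean value inequality holds in the viscosity sense; the supersolution case is symmetric, and together they give \eqref{IFHeat_meanvalue3+ha}. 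For the ``if'' direction one argues by contradiction: if $u$ satisfies the asymptotic formula in the viscosity sense but is not, say, a viscosity subsolution, there is a test function $\varphi$ touching from above with $\mathcal{H}^{s,-}_\infty\varphi(x,t) > 0$ strictly; plugging $\varphi$ into the expansion contradicts the mean value inequality for $\epsilon$ small. The one subtlety throughout is that $\mathcal{M}_{\epsilon,-}^{s,\infty}$ evaluated on $u$ requires $u$ to be bounded (so the tail integral converges) but only $\varphi$ to be $C^{2,1}$ locally; one splits the integral into the region near $C_\epsilon$, where $\varphi$'s regularity is used, and the far region, where boundedness and the integrability of $K_{1,s}$ at infinity (finiteness of $\kappa(1,s)^{-1}$) suffice — this splitting is where the $\mathcal{O}(\epsilon^2)$ rather than $o(\epsilon^2)$ enters.

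The main obstacle I anticipate is making the asymptotic expansion genuinely uniform and correctly book-kept in the coupled space-time integral against the hypersingular, non-integrable-at-the-origin kernel $K_{1,s}(\eta,-\tau)$: one must verify that the contributions of the Taylor remainder over $C_{R}(0,0)\setminus C_\epsilon(0,0)$ (for fixed small $R$) are $o(\epsilon^{2})$ after the $\epsilon^{2s}$ scaling, using the precise form $K_{1,s}(\eta,\tau) \sim |\tau|^{-1-s}(4\pi|\tau|)^{-1/2}e^{-\eta^2/(4|\tau|)}$ and the parabolic scaling $(\eta,\tau)\mapsto(\epsilon\eta,\epsilon^2\tau)$, and that the sup/inf over $S^n$ does not spoil this. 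Once the scaling bookkeeping is pinned down, everything else is the standard viscosity machinery already deployed for Theorem \ref{thm1a}.
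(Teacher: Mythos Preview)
Your plan follows the right template, but there is a genuine scaling error that would break the argument. You write
\[
\varphi(x,t) - \mathcal{M}_{\epsilon,-}^{s,\infty}\varphi(x,t) = c\,\epsilon^{2}\,\mathcal{H}^{s,-}_\infty\varphi(x,t) + o(\epsilon^{2}),
\]
but the correct expansion (this is Lemma \ref{asymp} in the paper) is
\[
u(x,t) - \mathcal{M}_{\epsilon,-}^{s,\infty} u(x,t) = \kappa(1,s)\,\epsilon^{2s}\,\mathcal{H}^{s,-}_\infty u(x,t) + \mathcal{O}(\epsilon^{2}).
\]
The leading coefficient is $\epsilon^{2s}$, not $\epsilon^{2}$: after the parabolic rescaling $(\eta,\tau)\mapsto(\epsilon\eta,\epsilon^{2}\tau)$ one uses $K_{1,s}(\epsilon\eta,\epsilon^{2}\tau)=\epsilon^{-3-2s}K_{1,s}(\eta,\tau)$ together with the Jacobian $\epsilon^{3}$, so the $\epsilon^{2s}$ prefactor in $\mathcal{M}_{\epsilon,-}^{s,\infty}$ turns the integral over $(\mathbb R^{2})\setminus C_\epsilon$ into the \emph{full} integral (coefficient $\epsilon^{2s}$ on the operator) minus an integral over $C_1$ that is $\mathcal{O}(\epsilon^{2})$ by the second-order Taylor bound. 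This is not cosmetic: in the converse direction you divide the identity by the leading coefficient and send $\epsilon\to 0$. With your scaling the remainder $\mathcal{O}(\epsilon^{2})/(c\epsilon^{2})=\mathcal{O}(1)$ does not vanish and you cannot conclude $\mathcal{H}^{s,-}_\infty v\ge 0$; with the correct $\epsilon^{2s}$ coefficient the remainder is $\mathcal{O}(\epsilon^{2-2s})\to 0$ since $s<1$, and the conclusion follows. This mismatch of orders is also why the theorem is stated with $\mathcal{O}(\epsilon^{2})$ rather than $o(\epsilon^{2})$.

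A second, smaller problem: you propose to apply the expansion to $\varphi$ and then compare to $u$ using ``monotonicity of $\mathcal{M}_{\epsilon,-}^{s,\infty}$ in its argument''. That monotonicity is not available: when $\nabla_x\varphi(x,t)\neq 0$ the operator integrates along $v=\nabla_x\varphi/|\nabla_x\varphi|$, a direction that depends on the function, so $u\le\varphi$ does not imply $\mathcal{M}_{\epsilon,-}^{s,\infty}u\le\mathcal{M}_{\epsilon,-}^{s,\infty}\varphi$. The paper avoids this entirely by applying Lemma \ref{asymp} directly to the auxiliary function $v:=v_{U,\phi,u}$ of \eqref{vvv}, which is $C^{2,1}$ in $U$ and globally bounded. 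This is also what Definitions \ref{Vissolagainmore} and \ref{meanvalueviscosityinfty} actually require: both the viscosity inequality and the asymptotic mean-value inequality are formulated for $v$, not for $u$ or $\varphi$ separately, so once the smooth lemma holds for $v$ the two implications are one-line deductions with no comparison step at all.
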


We can give a probabilistic interpretation related to  Theorem \ref{thm1aba} in terms of  a space-time coupled tug-of-war game as follows. Two players play a zero-sum game and at each  step of the game, the players pick directions $y\in S^{n}$ and $z\in S^{n}$, respectively. Then a random waiting time $\tau$ and a one-dimensional position change $\eta$ are drawn from the distribution $K_{1,s}(\eta,\tau)$ conditioned on the event that $(\eta,\tau)\in (\mathbb R^{n}\times\mathbb R_+)\setminus C_\epsilon^+(0,0)$. At $t+\tau$, the token is moved in space as $x\to x+\eta y$ if $\eta>0$ and as $x\to x+\eta z$ if $\eta<0$.  By the  dynamic programming principle, the expected value $v_\epsilon(x,t)$ of the game starting at $(x,t)$ satisfies
\begin{align} \label{meanvalue3+halll}
v_\epsilon(x,t)&=\sup_{y\in S^{n}}\inf_{z\in S^{n}}\frac 1 2\bigl (\mathcal{M}_{\epsilon,+}^{s,y} v_\epsilon(x,t)+\mathcal{M}_{\epsilon,+}^{s,-z} v_\epsilon(x,t)\bigr).
\end{align}
Once again,  compared to the  asymptotic mean value formula in Theorem \ref{thm1aba}, integration in \eqref{meanvalue3+halll} is forward in time and the formula in
\eqref{meanvalue3+halll} is exact, i.e. there is no $\mathcal O(\epsilon^{2})$ term.

As we will show in Section \ref{heuristic_derivation}, if $s\in (0,1)$, $ u\in C^{2,1}(C_{2\delta}(x,t))\cap L^\infty(\mathbb R^{n+1})$, and $\nabla_x u(x,t)\neq 0$, then in the limit $\epsilon\to 0$, the optimization problem
\begin{align}\label{Opt}
&\sup_{y\in S^{n}}\inf_{z\in S^{n}}\left\{\iint_{\mathbb R\times\mathbb R_+} [2 u(x,t)- u(x+|\eta| y,t+\tau)- u(x-|\eta| z,t+\tau)]\,K_{1,s}(\eta,\tau)\d\eta\d\tau\right\},
\end{align}
is solved by $y=z=v$, where $v=\nabla_x u(x,t)/|\nabla_x u(x,t)|\in S^{n}$. This explains the introduction of the operators $\mathcal H^{s,\pm}_\infty$ and formally (if
$\nabla_xv_\epsilon(x,t)\neq 0$),
\begin{align*}
v_\epsilon(x,t)=\mathcal{M}_{\epsilon,+}^{s,\infty} v_\epsilon(x,t).
\end{align*}

The purpose of this paper is to prove the asymptotic mean value formula for $\mathcal H^s$ stated in Theorem \ref{thm1a}, introduce the operators  $\mathcal H^{s,\pm}_\infty $, prove the asymptotic mean value formula for $\mathcal H^{s,\pm}_\infty$ stated in Theorem \ref{thm1aba}, and to prove asymptotic properties for both  $\mathcal H^s$ and $\mathcal H^{s,\pm}_\infty $. This paper opens up a number of interesting problems related to  properties of the operators $\mathcal H^{s,\pm}_\infty $ and concerning solutions to $\mathcal H^{s,\pm}_\infty u=0$.  Our analysis is in no way  final; instead, this paper should be seen as the first step towards a theory similar to \cite{graddep,bjor} but modelled on $(\partial_t- \Delta_x )^{s}$ instead of the fractional Laplacian $-(- \Delta_x )^{s}$. To our knowledge, the operators $\mathcal H^{s,\pm}_\infty $ have previously not been discussed in literature, but we note that when applied to functions which are independent of time, then  the operators formally coincide with $-\Delta^s_\infty$, where $\Delta^s_\infty$ is the infinity fractional Laplacian introduced in \cite{bjor}. We note that in the stationary case, versions of Theorem \ref{thm1a} are proved in \cite{ClaMarNA}.

\section{Viscosity formalism for $\mathcal{H}^s$}\label{sec2}

Let $D\subset\mathbb R^{n+1}$ be an open set. We denote by $\mbox{LSC}(D)$ the set of lower semi-continuous functions, i.e., all functions $f: D \to \R$ such that for all points $(\hat x,\hat t)\in D$ and for any sequence $\{(x_m, t_m)\}_m$, $(x_m, t_m)\in D$,   $(x_m,t_m) \to (\hat x,\hat t)$ as $m \to \infty$ in $D$, we have
\begin{equation*}
\liminf _{m \to \infty} f(x_m,t_m) \geq f(\hat x,\hat t).
\end{equation*}
We denote by
$\mbox{USC}(D)$ the set of upper semi-continuous functions, i.e., all functions $f: D \to \R$ such that for all points $(\hat x,\hat t)\in D$ and for any sequence $\{(x_m, t_m)\}_m$, $(x_m,  t_m)\in D$,   $(x_m,t_m) \to (\hat x,\hat t)$ as $n \to \infty$ in $D$, we have
\begin{equation*}
\limsup_{m \to \infty}f(x_m,t_m) \leq f(\hat x,\hat t).
\end{equation*}
Note that a function $f\in \mbox{USC}(D)$  if and only if $-f\in \mbox{LSC}(D)$. Also, $f$ is  continuous on $D$, $f\in C(D)$, if and only if
$f\in \mbox{USC}(D)\cap \mbox{LSC}(D)$.

Consider the equation
\begin{eqnarray}\label{solv1}
 \mathcal{H}^s u=0\mbox{ in }D.
\end{eqnarray}
Given $u\in L^\infty(\mathbb R^{n+1})$, an open set $U\subset D$, and  $\phi \in C^{2,1}(\overline U)$, we introduce the auxiliary (test) function
\syslab[v_{U,\phi,u} :=]{&\phi , \quad \mbox{ in }  U	\\
							&u, \quad \mbox{ in } \mathbb R^{n+1}\setminus U.
							\label{vvv}
							}

\begin{defn}\label{Vissolagain}
Let  $u\in L^\infty(\mathbb R^{n+1})$ and let $D\subset\mathbb R^{n+1}$ be an open set. $u\in \mbox{LSC}(D)$ is a viscosity supersolution to the equation in \eqref{solv1} in $D$ if for every $(\hat x,\hat t)\in D$, any neighborhood $U=U(\hat x,\hat t)\subset D$ and any $\phi \in C^{2,1}(\overline U)$ such that
\begin{eqnarray*}
(1)&& u(\hat x,\hat t)=\phi(\hat x,\hat t),\notag\\
(2)&& u(x,t)>\phi(x,t)\mbox{ for all }(x,t)\in U\setminus\{\hat x,\hat t\},
\end{eqnarray*}
it holds that
\begin{eqnarray*}
\mathcal{H}^s v(\hat x,\hat t)\geq 0,\mbox{ where } v:=v_{U,\phi,u}.
\end{eqnarray*}
$u\in \mbox{USC}(D)$ is a viscosity subsolution to the equation in \eqref{solv1} in $D$ if for every $(\hat x,\hat t)\in D$, any neighborhood $U=U(\hat x,\hat t)\subset D$ and any $\phi \in C^{2,1}(\overline U)$ such that
\begin{eqnarray*}
(1)&& u(\hat x,\hat t)=\phi(\hat x,\hat t),\notag\\
(2)&& u(x,t)<\phi(x,t)\mbox{ for all }(x,t)\in U\setminus\{\hat x,\hat t\},
\end{eqnarray*}
then,
\begin{eqnarray*}
\mathcal{H}^s v(\hat x,\hat t)\leq 0, \mbox{ where } v:=v_{U,\phi,u}.
\end{eqnarray*}
A function $u\in C(D)$ is said to be a viscosity solution to \eqref{solv1} in $D$ if it is both a viscosity supersolution and a viscosity subsolution to \eqref{solv1} in $D$.
\end{defn}

\begin{defn}\label{meanvalueviscosity}
Let $D\subset\mathbb R^{n+1}$ be an open set and let $u\in C(D)\cap L^\infty(\mathbb R^{n+1})$. We say that $u$ satisfies the asymptotic mean value formula
\begin{align}\label{meanvalue3+}
u(x,t)&=\mathcal{M}_\epsilon^s u(x,t)+\mathcal O(\epsilon^{2}),\mbox{ as }\epsilon\to 0,
\end{align}
in the viscosity sense at $(x,t)\in D$ if the following holds. For every $\phi$ as in Definition \ref{Vissolagain}, and touching $u$ from below, we have
\begin{align}\label{meanvalue3++}
v(x,t)&\geq\mathcal{M}_\epsilon^s v(x,t)+\mathcal O(\epsilon^{2}),\mbox{ as }\epsilon\to 0,
\end{align}
where $v$ is defined in \eqref{vvv}, and  for every $\phi$ as in Definition \ref{Vissolagain}, and touching $u$ from above, we have
\begin{align}\label{meanvalue3++gg}
v(x,t)&\leq\mathcal{M}_\epsilon^s v(x,t)+\mathcal O(\epsilon^{2}),\mbox{ as }\epsilon\to 0,
\end{align}
where $v$ is defined in \eqref{vvv}.
\end{defn}

\section{Proof of Theorem \ref{thm1-} and Theorem \ref{thm1a}}\label{sec3}

\subsection{Proof of Theorem \ref{thm1-}} Let $D\subset\mathbb R^{n+1}$ be an open set and let $u\in C^{2,1}(D)$. Let $(x,t)\in D$ and $(y,s)\in C_\epsilon^-(x,t)$. Using Taylor's formula at $(x,t)$, we have
\begin{align}
u(y,s)&= u(x,t)+\nabla_xu(x,t)\cdot(y-x)\notag\\
&+\frac 1 2 \langle \nabla_x^2u(x,t)(y-x), (y-x)\rangle+\partial_tu(x,t)(s-t)+ {o}(\epsilon^2),
\end{align}
as $\epsilon\to 0$. Taking the average,
\begin{align*}
\bariint_{C_\epsilon^-(x,t)}u(y,s) \, \d y\d s=& u(x,t)+\bariint_{C_\epsilon^-(x,t)}\nabla_xu(x,t)\cdot(y-x)\notag \, \d y\d s\\
&+\bariint_{C_\epsilon^-(x,t)}\frac 1 2 \langle \nabla_x^2u(x,t)(y-x), (y-x)\rangle \, \d y\d s\notag\\
&+\bariint_{C_\epsilon^-(x,t)}\partial_tu(x,t)(s-t)\, \d y\d s+ {o}(\epsilon^2).
\end{align*}
Note that the time-slices of $ C_\epsilon^-(x,t)$ are symmetric with respect to $x$. Therefore, by symmetry, the contribution from the term $\nabla_xu(x,t)\cdot(y-x)$ to the average is zero, and therefore, the first integral term on the right-hand side vanishes. Again, by symmetry, we can write the second integral as
\begin{align*}
\bariint_{C_\epsilon^-(x,t)}\frac 1 2 \langle \nabla_x^2u(x,t)(y-x), (y-x)\rangle \, \d y\d s&= \frac 1 2 \bariint_{C_\epsilon^-(x,t)} \sum_{i,j}\partial_{x_ix_j} u(x,t) (y_i-x_i)(y_j-x_j) \, \d y\d s\\
&= \frac 1 2 \bariint_{C_\epsilon^-(x,t)} \sum_{i}\partial_{x_ix_i} u(x,t) (y_i-x_i)^2 \, \d y\d s\\
&= \frac 1 {2n} \Delta_x u(x,t)   \bariint_{C_\epsilon^-(x,t)} (y-x)^2 \, \d y\d s.
\end{align*}
A calculation then shows that
\begin{align*}
\bariint_{C_\epsilon^-(x,t)}\frac 1 2 \langle \nabla_x^2u(x,t)(y-x), (y-x)\rangle \, \d y\d s= \frac{\epsilon^2}{2(n+2)}\Delta_x u(x,t).
\end{align*}
 Similarly,
\begin{align*}
\bariint_{C_\epsilon^-(x,t)}\partial_tu(x,t)(s-t)\, \d y\d s&=-\frac{\epsilon^2}{2} \partial_tu(x,t).
\end{align*}
Hence,
\begin{align*}\label{meanvalue3+haproof}
\bariint_{C_\epsilon^-(x,t)} u(y,s)\, \d y\d s=u(x,t)-\frac{\epsilon^2}{2}(\partial_t-\frac{1}{(n+2)}\Delta_x)u(x,t))+{o}(\epsilon^{2}),\mbox{ as }\epsilon\to 0.
\end{align*}
This implies that the asymptotic mean value formula in Theorem \ref{thm1-} holds if and only if $u$ solves the heat equation with diffusion coefficient $(n+2)^{-1}$,
\[(\partial_t-\frac{1}{n+2} \Delta_x)u(x,t)=0.\]

\subsection{Proof of Theorem \ref{thm1a} for smooth functions}
We first establish a version of Theorem \ref{thm1a} for smooth functions.
\begin{lem}\label{FHeat_lemma_smooth}
	Let $(\hat x,\hat t) \in \mathbb{R}^{n+1}$ and consider $u\in C^{2,1}(C_{2\delta}(\hat x,\hat t))\cap L^\infty(\mathbb R^{n+1})$. Then 	
	\bgs{ u(x,t)= \mathcal{M}_\epsilon^s u(x,t) +
		  \kappa(n,s) \epsilon^{2s} \mathcal{H}^s u(x,t)	  +\mathcal O(\epsilon^{2})
	}
	for all $(x,t)\in C_{\delta}(\hat x,\hat t)$ as $\epsilon\to 0$.
\end{lem}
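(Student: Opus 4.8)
The plan is to expand $u$ around the point $(x,t)$ using Taylor's formula and match terms against the definitions of $\mathcal M_\epsilon^s u$ and $\mathcal H^s u$. Fix $(x,t)\in C_\delta(\hat x,\hat t)$ and recall from \eqref{eq1asymp+intr} that
\[
\mathcal H^s u(x,t)=\frac12\iint_{\mathbb R^{n+1}}\bigl(2u(x,t)-u(x+w,t+\tau)-u(x-w,t+\tau)\bigr)K_{n,s}(w,-\tau)\,\d w\d\tau,
\]
with only $(w,\tau)\in\mathbb R^n\times\mathbb R_-$ contributing. Split this integral at the parabolic cylinder $C_\epsilon^-(0,0)$ into an inner piece over $C_\epsilon^-(0,0)$ and an outer piece over $(\mathbb R^n\times\mathbb R_-)\setminus C_\epsilon^-(0,0)$; by the definition \eqref{eq1asymp1+} of $\mathcal M_\epsilon^s$ and \eqref{eq1asymp2+} of $\kappa(n,s)$, the outer piece is precisely $\kappa(n,s)^{-1}\epsilon^{-2s}\bigl(u(x,t)-\mathcal M_\epsilon^s u(x,t)\bigr)$. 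Thus the claimed identity is equivalent to showing that the \emph{inner} contribution, namely
\[
\frac12\iint_{C_\epsilon^-(0,0)}\bigl(2u(x,t)-u(x+w,t+\tau)-u(x-w,t+\tau)\bigr)K_{n,s}(w,-\tau)\,\d w\d\tau,
\]
equals $\mathcal O(\epsilon^{2-2s}\cdot\epsilon^{2s})=\mathcal O(\epsilon^2)$ — wait, more precisely, that $\kappa(n,s)\epsilon^{2s}$ times this inner integral is $\mathcal O(\epsilon^2)$; equivalently the inner integral itself is $\mathcal O(\epsilon^{2-2s})$, hmm, let me restate: we need $\epsilon^{2s}$ times (the difference between $\mathcal H^s u$ and its outer truncation) to be $\mathcal O(\epsilon^2)$, i.e. the inner integral is $\mathcal O(\epsilon^{2-2s})$.

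The key step is the second-order Taylor expansion on the inner region. Since $u\in C^{2,1}$ near $(x,t)$ and $|w|<\epsilon$, $|\tau|<\epsilon^2$ there, write
\[
u(x\pm w,t+\tau)=u(x,t)\pm\nabla_x u(x,t)\cdot w+\tfrac12\langle\nabla_x^2u(x,t)w,w\rangle+\partial_t u(x,t)\tau+o(\epsilon^2).
\]
In the symmetrized combination $2u(x,t)-u(x+w,t+\tau)-u(x-w,t+\tau)$ the odd-in-$w$ linear term cancels, leaving $-\langle\nabla_x^2u(x,t)w,w\rangle-2\partial_t u(x,t)\tau+o(\epsilon^2)=\mathcal O(|w|^2+|\tau|)+o(\epsilon^2)=\mathcal O(\epsilon^2)$ uniformly on $C_\epsilon^-(0,0)$. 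Multiplying by $K_{n,s}(w,-\tau)$ and integrating, the inner integral is bounded by $C\epsilon^2\iint_{C_\epsilon^-(0,0)}|K_{n,s}(w,-\tau)|\,\d w\d\tau$. A scaling computation — substituting $w=\epsilon w'$, $\tau=\epsilon^2\tau'$ and using the parabolic homogeneity $K_{n,s}(\epsilon w',-\epsilon^2\tau')=\epsilon^{-n-2-2s}K_{n,s}(w',-\tau')$ together with the integrability of $K_{n,s}$ near the origin up to the appropriate order — shows $\iint_{C_\epsilon^-(0,0)}|K_{n,s}(w,-\tau)|\,\d w\d\tau = C\epsilon^{-2s}$ (finite times $\epsilon^{-2s}$; one should also check that this local integral converges, which it does because $-s<0$ makes the singularity integrable after using the Gaussian decay and the cancellation is not even needed for absolute integrability of $K_{n,s}$ itself times a bounded factor — in fact one uses that $\iint_{C_1^-}|K_{n,s}|<\infty$, which follows from the same computation that shows $\kappa(n,s)<\infty$). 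Hence the inner integral is $\mathcal O(\epsilon^2\cdot\epsilon^{-2s})=\mathcal O(\epsilon^{2-2s})$, and multiplying by $\kappa(n,s)\epsilon^{2s}$ gives $\mathcal O(\epsilon^2)$, as required.

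I expect the main obstacle to be the bookkeeping around the kernel's integrability and the precise scaling: one must verify that $\iint_{C_\epsilon^-(0,0)}|K_{n,s}(w,-\tau)|\,\d w\d\tau$ is finite and of exact order $\epsilon^{-2s}$, which requires unpacking \eqref{eq2}–\eqref{eq2+} — the factor $(t-\gamma)^{-1-s}$ against the Gaussian $W_n$, integrating $w$ out first to get a constant times $|\tau|^{-1-s}\cdot|\tau|^{0}$... actually $\int_{\mathbb R^n}W_n(w,|\tau|,0,0)\d w=1$, so $\int_{\mathbb R^n}|K_{n,s}(w,-\tau)|\d w = |\Gamma(-s)|^{-1}|\tau|^{-1-s}$, and then $\int_0^{\epsilon^2}|\tau|^{-1-s}\d\tau$ diverges — so one cannot simply pull out $\epsilon^2$ and integrate $|K_{n,s}|$; instead the cancellation $2u(x,t)-u(x+w)-u(x-w)=\mathcal O(|w|^2)+\mathcal O(|\tau|)$ must be used together with the kernel to get convergence: $\int_{\mathbb R^n}(|w|^2+|\tau|)W_n(w,|\tau|,0,0)\,\d w = c_n|\tau| + |\tau| = \mathcal O(|\tau|)$, so after $w$-integration the integrand is $\mathcal O(|\tau|\cdot|\tau|^{-1-s})=\mathcal O(|\tau|^{-s})$, which integrates over $(0,\epsilon^2)$ to $\mathcal O(\epsilon^{2(1-s)})=\mathcal O(\epsilon^{2-2s})$. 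So the careful point is to interleave the Taylor remainder bound with the $w$-integration \emph{before} estimating the $\tau$-integral, rather than bounding $|K_{n,s}|$ crudely. Everything else is routine; the $o(\epsilon^2)$ remainder from Taylor contributes, after the same interleaving, a term of lower order which is absorbed into $\mathcal O(\epsilon^2)$ after multiplication by $\kappa(n,s)\epsilon^{2s}$ — strictly one should track it as $o(\epsilon^2)$ inside the integral and note $\iint_{C_\epsilon^-}K_{n,s}$ applied to an $o(1)$-uniform factor times $(|w|^2+|\tau|)$ still yields $o(\epsilon^{2-2s})$.
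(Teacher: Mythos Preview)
Your proposal is correct and follows essentially the same approach as the paper: split $\mathcal H^s u$ at $C_\epsilon^-(0,0)$, identify the outer piece with $\kappa(n,s)^{-1}\epsilon^{-2s}(u-\mathcal M_\epsilon^s u)$, and bound the inner piece via Taylor plus the integrability of $(|w|^2+|\tau|)K_{n,s}(w,-\tau)$. The only cosmetic difference is that the paper first rescales $(w,\tau)\mapsto(\epsilon w,\epsilon^2\tau)$ so that the inner integral becomes $\iint_{C_1^-(0,0)}\delta_\epsilon\,K_{n,s}$ with $|\delta_\epsilon|\leq c\epsilon^2(|w|^2+|\tau|)$ against a fixed finite integral, which avoids the false start you had to correct (pulling out $\epsilon^2$ against the divergent $\iint_{C_\epsilon^-}|K_{n,s}|$).
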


\begin{proof}
Fix $(x,t)\in C_{\delta}(\hat x,\hat t)$ and consider $0<\epsilon<\delta$. First, we note that
\begin{align*}
(\kappa(n,s))^{-1}: &= \iint_{\mathbb R^{n+1}\setminus C_1^-(0,0)}K_{n,s}(w,-\tau)\, \d w\d \tau\\
&= \iint_{\mathbb R^{n+1}\setminus C_{\epsilon}^-(0,0)}K_{n,s}(\epsilon w,-{\epsilon^{-2}\tau}) \epsilon^{-(n+2)}\, \d w\d \tau\\
&= \epsilon^{-(n+2)} \iint_{\mathbb R^{n+1}\setminus C_{\epsilon}^-(0,0)}K_{n,s}(w,\tau) \epsilon^{2(1+s)+n}\, \d w\d \tau\\
&=\epsilon^{2s} \iint_{\mathbb R^{n+1}\setminus C_{\epsilon}^-(0,0)}K_{n,s}(w,\tau)\, \d w\d \tau.
\end{align*}
We have
\begin{align}\label{Hs_smooth_u-M}
	&{u(x,t)- \mathcal{M}_\epsilon^s u(x,t)}\notag\\
	=&\, \kappa(n,s)\epsilon^{2s}\iint_{\mathbb R^{n+1}\setminus C_\epsilon^-(0,0)} u(x,t) K_{n,s}(w,-\tau)\, \d w\d \tau- \mathcal{M}_\epsilon^s u(x,t)\notag\\
=& \frac{1}{2}\kappa(n,s)\epsilon^{2s}\iint_{\mathbb R^{n+1}\setminus C_\epsilon^-(0,0)} {(2u(x,t)-u(x+w,t+\tau)-u(x-w,t+\tau))}K_{n,s}(w,-\tau)\, \d w\d \tau.
\end{align}
By a change of variables, and introducing
$$\delta_\epsilon(u,(x,t),(w,\tau)):=(2u(x,t)-u(x+\epsilon w,t+\epsilon^2\tau)-u(x- \epsilon w,t+\epsilon^2\tau)),$$
we have
\begin{align}\label{Hs_smooth_u-M_delta}
	{u(x,t)- \mathcal{M}_\epsilon^s u(x,t)}&=\frac{1}{2}\kappa(n,s)\epsilon^{2s}\iint_{\mathbb R^{n+1}\setminus C_{1}^-(0,0)} {\delta_\epsilon(u,(x,t),(w,\tau))}K_{n,s}(w,-\tau) \epsilon^{-(2+2s+n)}\epsilon^{2+n}\, \d w\d \tau\notag\\
	&=\frac{1}{2}\kappa(n,s)\iint_{\mathbb R^{n+1}\setminus C_{1}^-(0,0)} {\delta_\epsilon(u,(x,t),(w,\tau))}K_{n,s}(w,-\tau) \, \d w\d \tau.
\end{align}
Hence,
\begin{align}
\label{Hs_smooth_u-M_H-I}
	\frac {2u(x,t)- 2\mathcal{M}_\epsilon^s u(x,t)}{\kappa(n,s)}&= \iint_{\mathbb R^{n+1}\setminus C_{1}^-(0,0)} {\delta_\epsilon(u,(x,t),(w,\tau))}K_{n,s}(w,-\tau) \, \d w\d \tau \notag\\
	&= \iint_{\mathbb R^{n+1}} {\delta_\epsilon(u,(x,t),(w,\tau))}K_{n,s}(w,-\tau) \, \d w\d \tau- I_\epsilon(u,(x,t)) \notag\\
	&=2\epsilon^{2s}\mathcal{H}^s u(x,t)- I_\epsilon(u,(x,t)),
\end{align}
where
\begin{align}
\label{I}
I_\epsilon(u,(x,t)):=\iint_{C_1^-(0,0)} {\delta_\epsilon(u,(x,t),(w,\tau))}K_{n,s}(w,-\tau)\, \d w\d \tau.
\end{align}
Since $(\hat x+\epsilon w, \hat t+\epsilon^2\tau) \in C_{2\delta}(\hat x,\hat t)$, we can use Taylor's formula and deduce that
\[|\delta_\epsilon(u,(x,t),(w,\tau))|\leq c\epsilon^2(|w|^2+|\tau|)\]
for all $(w,\tau)\in C_1(0,0)$. Hence,
\begin{align}\label{first}
|I_\epsilon(u,(x,t))|\leq c\epsilon^2\iint_{C_1^-(0,0)} (|w|^2+|\tau|)K_{n,s}(w,-\tau)\, \d w\d \tau\leq \tilde{c}\epsilon^2.
\end{align}
Put together,
\[
	2u(x,t)- 2\mathcal{M}_\epsilon^s u(x,t)=2\kappa(n,s)\epsilon^{2s}\mathcal{H}^s u(x,t)- \kappa(n,s)I_\epsilon(u,(x,t)),
\]
which implies
\begin{align}\label{Hs_smooth_conc}
u(x,t)=\mathcal{M}_\epsilon^s u(x,t)+\kappa(n,s)\epsilon^{2s}\mathcal{H}^s u(x,t)+\mathcal O(\epsilon^{2}), 
\end{align}
proving the lemma.
\end{proof}

\subsection{Proof of Theorem \ref{thm1a}}
Let $(\hat x,\hat t)\in D$ be arbitrary and let $\delta>0$ be such that $U:=C_{2\delta}(\hat x,\hat t)$ satisfies $\overline {U}\subset D$. Let $\phi \in C^{2,1}(\overline{U})$ be such that
\begin{eqnarray*}
(1)&& u(\hat x,\hat t)=\phi(\hat x,\hat t),\notag\\
(2)&& u(x,t)>\phi(x,t)\mbox{ for all }(x,t)\in U\setminus\{\hat x, \hat t\}.
\end{eqnarray*}
We let $v:=v_{U,\phi,u}$  be defined as in \eqref{vvv}, hence $v\in C^{2,1}(U)\cap L^\infty(\mathbb R^{n+1})$.  By Lemma \ref{FHeat_lemma_smooth}, we have that
		\begin{eqnarray}\label{asaf}
v(\hat x,\hat t)= \mathcal{M}_\epsilon^sv(\hat x,\hat t) +  \kappa(n,s) \epsilon^{2s} \mathcal{H}^sv(\hat x,\hat t) + \mathcal O(\epsilon^{2}),\mbox{ as }\epsilon\to 0.
\end{eqnarray}

Assume that $u$ is a supersolution to $\mathcal{H}^s u(\hat x,\hat t)=0$ in the viscosity sense. Then
\[ \mathcal{H}^sv(\hat x,\hat t) \geq 0\]
and this implies that
\[v(\hat x,\hat t)\geq \mathcal{M}_\epsilon^sv(\hat x,\hat t)  + \mathcal O(\epsilon^{2}),\mbox{ as }\epsilon\to 0.
\]
Similarly, if $u$ is a subsolution to $\mathcal{H}^s u(\hat x,\hat t)=0$ in the viscosity sense, and if $\phi \in C^{2,1}(\overline{U})$  touches $u$ from above at $(\hat x,\hat t)$, then
\[v(\hat x,\hat t)\leq \mathcal{M}_\epsilon^sv(\hat x,\hat t)  + \mathcal O(\epsilon^{2}),\mbox{ as }\epsilon\to 0.
\]
In particular, if  $u$ is a viscosity solution to the equation in \eqref{solv2+}, then $u$ satisfies the asymptotic mean value formula
\begin{align*}
    u(x,t)&=\mathcal{M}_\epsilon^s u(x,t)+\mathcal O(\epsilon^{2}),\mbox{ as }\epsilon\to 0,
\end{align*}
for every $(x,t)\in D$ in the viscosity sense.

Next, assume instead that $u$ satisfies (\ref{FHeat_meanvalue3+ha}) in the viscosity sense. Then, assuming that $\phi$ touches $u$ from below at $(\hat x,\hat t)$ we have
\[v(\hat x,\hat t)\geq \mathcal{M}_\epsilon^s v(\hat x,\hat t)+ \mathcal O (\epsilon^{2}).\]
Therefore, by dividing by  $\kappa(n,s)\epsilon^{2s}$ in \eqref{asaf} and sending $\epsilon\to 0$, we deduce that
\[\mathcal{H}^s v(\hat x,\hat t)\geq 0.\]
This proves that $u$ is a supersolution to $\mathcal{H}^s u(x,t)=0$ at $(\hat x,\hat t)$ in the viscosity sense. An analogous argument shows that $u$ is also a subsolution to $\mathcal{H}^s u(x,t)=0$ at $(\hat x,\hat t)$ in the viscosity sense. I.e., if $u$ satisfies the asymptotic mean value formula in (\ref{FHeat_meanvalue3+ha}) in the viscosity sense, then
$u$ is a solution to $\mathcal{H}^s u(x,t)=0$ in the viscosity sense.


\section{Derivation of $\mathcal{H}^{s,\pm}_\infty$ and viscosity solutions}\label{sec4}
In this section we introduce $\mathcal{H}^{s,\pm}_\infty$ and viscosity solutions. For simplicity we only consider the operator $\mathcal{H}^{s,-}_\infty$ as the operator 
$\mathcal{H}^{s,+}_\infty$ can be handled by analogy.

\subsection{Heuristic derivation of $\mathcal{H}^{s,-}_\infty$}\label{heuristic_derivation}
Consider the  optimization problem in \eqref{Opt}. Assume that $u\in C^{2,1}(C_{2\delta}(x,t))\cap L^\infty(\mathbb R^{n+1})$ for some $\delta>0$. We write
\begin{align}\label{Opt1+}
&2 u(x,t)- u(x+|\eta| y,t+\tau)- u(x-|\eta| z,t+\tau)\notag\\
&=-( u(x+|\eta| y,t+\tau)+ u(x-|\eta| z,t+\tau)-|\eta|\nabla_x u(x,t)\cdot (y-z)-2 u(x,t))\notag\\
&\ -|\eta|\nabla_x u(x,t)\cdot (y-z).
\end{align}
Assume that $(\eta,\tau)\in (\mathbb R\times\mathbb R_-)\cap C_\delta(0,0)$. Then
\begin{align}\label{Opt2+}
&| u(x+|\eta| y,t+\tau)+ u(x-|\eta| z,t+\tau)-|\eta|\nabla_x u(x,t)\cdot (y-z)-2 u(x,t)|\notag\\
&\leq c( u,\delta)(|\tau|+|\eta|^2)
\end{align}
for some $c( u,\delta)$. It is easily checked that
\begin{align}\label{Opta}
\iint_{ (\mathbb R\times\mathbb R_-)\cap C_\delta(0,0)} c( u,\delta)(|\tau|+|\eta|^2)\,K_{1,s}(\eta,-\tau)\d\eta\d\tau  = \tilde{c} \int_0^{\delta^2} \tau^{-s}d\tau<\infty
\end{align}
for some $\tilde{c}$, whenever $s\in (0,1)$. We have $(\mathbb R\times\mathbb R_-)\setminus C_\delta(0,0)=D_1\cup D_2$ where
\[D_1:= \{(\eta,\tau): \tau\leq -\delta^2\},\enskip D_2:= \{(\eta,\tau): |\eta|>\delta, -\delta^2<\tau<0\}.\]
 If $(\eta,\tau)\in (\mathbb R\times\mathbb R_-)\setminus C_\delta(0,0)$, then
\begin{align}\label{Opt3+}
&|( u(x+|\eta| y,t+\tau)+ u(x-|\eta| z,t+\tau)-|\eta|\nabla_x u(x,t)\cdot (y-z)-2 u(x,t))|\leq c\max\{\delta,|\eta|\},
\end{align}
for some  $c$ as $ u$ is globally bounded. Furthermore,
\begin{align}\label{Optab+}
\iint_{D_2} \max\{\delta,|\eta|\}\,K_{1,s}(\eta,-\tau)\d\eta\d\tau&=\int_{-\delta^2}^0 \int_{|\eta|\geq \delta}|\eta|K_{1,s}(\eta,-\tau)\d\eta\d\tau\notag\\
&=-\frac{1}{\Gamma(-s)}\int_{0}^{\delta^2} \int_{|\eta|\geq \delta}|\eta|\frac{e^{-\eta^2/(4\tau)}}{(4\pi\tau)^{1/2}}\frac 1{\tau^{1+s}}\d\eta\d\tau\notag\\
&=\frac{2}{\Gamma(-s)}\int_{0}^{\delta^2} \int_{|\eta|\geq \delta}\frac{\partial_\eta(e^{-\eta^2/(4\tau)})}{(4\pi\tau)^{1/2}}\frac 1{\tau^{s}}\d\eta\d\tau\notag\\
&=-\frac{4}{\Gamma(-s)}\int_{0}^{\delta^2}\frac{e^{-\delta^2/(4\tau)}}{(4\pi\tau)^{1/2}}\frac 1{\tau^{s}}\d\tau<\infty.
\end{align}
Similarly,
\begin{align}\label{Optab}
\iint_{D_1} \max\{\delta,|\eta|\}\,K_{1,s}(\eta,-\tau)\d\eta\d\tau=&\delta\int_{\delta^2}^{\infty}\int_0^\delta K_{1,s}(\eta,\tau)\d\eta\d\tau\notag\\
&+\int_{\delta^2}^{\infty}\int_{\delta}^{\infty} |\eta|K_{1,s}(\eta,\tau)\d\eta\d\tau<\infty.
\end{align}
Combining these estimates, we see that
\begin{align}\label{Optgg}
&\iint_{\mathbb R\times\mathbb R_-} [2 u(x,t)- u(x+|\eta| y,t+\tau)- u(x-|\eta| z,t+\tau)]\,K_{1,s}(\eta,-\tau)\d\eta\d\tau\notag\\
&=I+\nabla_x u(x,t)\cdot (z-y)\iint_{\mathbb R\times\mathbb R_-} |\eta|\,K_{1,s}(\eta,-\tau)\d\eta\d\tau,
\end{align}
where $I$ is finite for every $s\in (0,1)$. However,
\begin{align}\label{Optggg}
\iint_{\mathbb R\times\mathbb R_-} |\eta|\,K_{1,s}(\eta,-\tau)\d\eta\d\tau&=\iint_{\mathbb R\times\mathbb R_+} |\eta|\frac{e^{-\eta^2/(4\tau)}}{(4\pi\tau)^{1/2}}\frac 1{\tau^{1+s}}\d\eta\d\tau\notag\\
&=\hat c\int_{\mathbb{R}} \eta^{-2s}\d\eta
\end{align}
for some $\hat c$, and the last integral is divergent if $s\in(0,1)$ and then equal to $+\infty$. Hence, two situations emerge. First, if $\nabla_x u(x,t)=0$, then the divergent term disappears and this has to be treated separately. Note, that if $\nabla_x u(x,t)=0$, then the two expressions stated in the optimization problem in \eqref{Opt} are both bounded from above and below. This is a again a consequence of  regularity of $ u$ at $(x,t)$ and of the global boundedness of $ u$. Second, if $\nabla_x u(x,t)\neq 0$, then we let $v=\nabla_x u(x,t)/|\nabla_x u(x,t)|$ and we note that
$$\nabla_x u(x,t)\cdot(z-\nabla_x u(x,t))\geq 0$$
for all $z\in S^n$. Hence, as $y=v$ is a candidate in the optimization problem, we are compelled to also choose $z=v$. This leads us to introduce  the fractional parabolic infinity operators as stated in Definition \ref{IFLdefn}.

\begin{remark}
In \cite{bjor}, the authors consider $(2s-1)$-H\"{o}lder functions, and define the infinity fractional Laplacian only for $s\in(\frac{1}{2},1)$. In this paper, we consider globally bounded and locally $C^{2,1}$ functions. In this smaller family, we can define the parabolic version of infinity fractional Laplacian for all $s\in(0,1)$.
\end{remark}

\subsection{Viscosity formalism for $\mathcal{H}_\infty^{s,-}$} \label{viscosity_formalism_ifho}
Let $D\subset\mathbb R^{n+1}$ be an open set, $(x,t)\in D$ and consider $u\in C^{2,1}(C_{2\delta}(x,t))\cap L^\infty(\mathbb R^{n+1})$.  

We say that $ u$ is a supersolution to
$\mathcal{H}_\infty^{s,-}u=0$ at $(x,t)$  if there exists, for any $\xi>0$ small, $y_\xi\in S^{n}$ such that
\begin{align}
\inf_{z\in S^{n}}\left\{\iint_{\mathbb R\times\mathbb R} [2 u(x,t)- u(x+|\eta| y_\xi,t+\tau)- u(x-|\eta| z,t+\tau)]\,K_{1,s}(\eta,-\tau)\d\eta\d\tau\right\}\geq \xi.\label{infop}
\end{align}
If $\nabla_x  u(x,t)=0$, one can deduce, again using Taylor's formula as above, that the above integral is finite for any choice of $y_\xi$ and $z$. As $S^{n}$ is compact, there is
a subsequence $\xi_i\rightarrow 0$ and $y_{\xi_i}\rightarrow y_0$ such that, in the limit,
\begin{align}
\inf_{z\in S^{n}}\left\{\iint_{\mathbb R\times\mathbb R} [2 u(x,t)- u(x+|\eta| y_0,t+\tau)- u(x-|\eta| z,t+\tau)]\,K_{1,s}(\eta,-\tau)\d\eta\d\tau\right\}\geq 0.\notag
\end{align}
Hence, in the case $\nabla_x  u(x,t)=0$, we say that $u$ is a supersolution to $\mathcal{H}_\infty^{s,-}u=0$ at $(x,t)$ if there is a $y_0$ such that the above inequality holds. On the other hand,  if $\nabla_x  u(x,t)\neq 0$, we say that $u$ is a supersolution to $\mathcal{H}_\infty^{s,-}u=0$ at $(x,t)$ if
\begin{align*}
\iint_{\mathbb R\times\mathbb R} [2 u(x,t)- u(x-|\eta| v,t+\tau)- u(x+|\eta| v,t+\tau)]\,K_{1,s}(\eta,-\tau)\d\eta\d\tau\geq 0
\end{align*}
where $v={\nabla_x  u(x,t)}/{|\nabla_x  u(x,t)|}$. If $ u$ is a supersolution at $(x,t)$ we write $\mathcal{H}_\infty^{s,-}u(x,t)\geq 0$.

We say that $ u$ is a subsolution $\mathcal{H}_\infty^{s,-}u=0$ at $(x,t)$  if there exists, for any $\xi>0$ small, $y_\xi\in S^{n}$ such that
\begin{align}
\sup_{y\in S^{n}}\left\{\iint_{\mathbb R\times\mathbb R} [2 u(x,t)- u(x+|\eta| y,t+\tau)- u(x-|\eta| z_\xi,t+\tau)]\,K_{1,s}(\eta,-\tau)\d\eta\d\tau\right\}\leq \xi.\label{supop}
\end{align}
Again,  $\nabla_x  u(x,t)=0$, then the above integral is finite for any choice of $y$ and $z_\xi$. As $S^{n}$ is compact, there is
a subsequence $\xi_i\rightarrow 0$ and $z_{\xi_i}\rightarrow z_0$ such that, in the limit,
\begin{align}
\sup_{y\in S^{n}}\left\{\iint_{\mathbb R\times\mathbb R} [2 u(x,t)- u(x+|\eta| y,t+\tau)- u(x-|\eta| z_0,t+\tau)]\,K_{1,s}(\eta,-\tau)\d\eta\d\tau\right\}\leq 0.\notag
\end{align}
So, in the case $\nabla_x  u(x,t)=0$, we say that $ u$ is a subsolution to $\mathcal{H}_\infty^{s,-}u=0$ at $(x,t)$ if there is a $z_0$ such that the above inequality holds. On the other hand, by the heuristic argument in Section \ref{heuristic_derivation} above, we are lead to say that, if $\nabla_x  u(x,t)\neq 0$, $ u$ is a subsolution to $\mathcal{H}_\infty^{s,-}u=0$ at $(x,t)$ if
\begin{align*}
\iint_{\mathbb R\times\mathbb R} [2 u(x,t)- u(x-|\eta| v,t+\tau)- u(x+|\eta| v,t+\tau)]\,K_{1,s}(\eta,-\tau)\d\eta\d\tau\leq 0
\end{align*}
where $v={\nabla_x  u(x,t)}/{|\nabla_x  u(x,t)|}$.
If $ u$ is a subsolution to $\mathcal{H}_\infty^{s,-}u=0$ at $(x,t)$, we write $\mathcal{H}_\infty^s u(x,t)\leq 0$.

We consider the equation
\begin{eqnarray}\label{solv1more}
  \mathcal{H}_\infty^{s,-}  u=0\mbox{ in }D.
\end{eqnarray}

\begin{defn}\label{Vissolagainmore} Let  $u\in L^\infty(\mathbb R^{n+1})$ and let $D\subset\mathbb R^{n+1}$ be an open set. $u\in \mbox{LSC}(D)$ is
a viscosity supersolution to the equation in \eqref{solv1more} in $D$ if for every $(\hat x,\hat t)\in D$, any neighborhood $U=U(\hat x,\hat t)\subset D$ and any $\phi \in C^{2,1}(\overline U)$ such that
\begin{eqnarray*}
(1)&& u(\hat x,\hat t)=\phi(\hat x,\hat t),\notag\\
(2)&& u(x,t)>\phi(x,t)\mbox{ for all }(x,t)\in U\setminus\{\hat x,\hat t\},
\end{eqnarray*}
it holds that
\begin{eqnarray*}
 \mathcal{H}_\infty^{s,-}  v(\hat x,\hat t)\geq 0,\mbox{ where } v:=v_{U,\phi,u}.
\end{eqnarray*}
$u\in \mbox{USC}(D)$ is
a viscosity subrsolution to the equation in \eqref{solv1more} in $D$ if for every $(\hat x,\hat t)\in D$, any neighborhood $U=U(\hat x,\hat t)\subset D$ and any $\phi \in C^{2,1}(\overline U)$ such that
\begin{eqnarray*}
(1)&& u(\hat x,\hat t)=\phi(\hat x,\hat t),\notag\\
(2)&& u(x,t)<\phi(x,t)\mbox{ for all }(x,t)\in U\setminus\{\hat x,\hat t\},
\end{eqnarray*}
then,
\begin{eqnarray*}
 \mathcal{H}_\infty^{s,-}  v(\hat x,\hat t)\leq 0, \mbox{ where } v:=v_{U,\phi,u}.
\end{eqnarray*}
A function $u\in C(D)\cap L^\infty(\mathbb R^{n+1})$ is said to be a viscosity solution to
\eqref{solv1more} in $D$ if it is both a viscosity supersolution and a viscosity subsolution to \eqref{solv1more} in $D$.
\end{defn}

Recall that if $v$ is defined as in \eqref{vvv}, then $\mathcal{M}_{\epsilon,\pm}^{s,\infty} v(x,t)$ is defined as in Definition \ref{IFLdefn+}.

\begin{defn}\label{meanvalueviscosityinfty}  Let $D\subset\mathbb R^{n+1}$ be an open set and let $u\in C(D)\cap L^\infty(\mathbb R^{n+1})$. We say that $u$ satisfies the asymptotic mean value formula
\begin{align}\label{meanvalue3+infty}
u(x,t)&=\mathcal{M}_{\epsilon,-}^{s,\infty} u(x,t)+\mathcal O(\epsilon^{2}),\mbox{ as }\epsilon\to 0,
\end{align}
in the viscosity sense at $(x,t)\in D$ if the following holds. For every $\phi$ as in Definition \ref{Vissolagainmore}, and touching $u$ from below, we have
\begin{align}\label{meanvalue3++infty}
v(x,t)&\geq\mathcal{M}_{\epsilon,-}^{s,\infty} v(x,t)+\mathcal O(\epsilon^{2}),\mbox{ as }\epsilon\to 0,
\end{align}
where $v$ is defined in \eqref{vvv}, and for every $\phi$ as in Definition \ref{Vissolagainmore}, and touching $u$ from above, we have
\begin{align}\label{meanvalue3++gginfty}
v(x,t)&\leq\mathcal{M}_{\epsilon,-}^{s,\infty} v(x,t)+\mathcal O(\epsilon^{2}),\mbox{ as }\epsilon\to 0,
\end{align}
where $v$ is defined in \eqref{vvv}.
\end{defn}


\section{Proof of Theorem \ref{thm1aba}}\label{sec5}

We first establish  a version of Theorem \ref{thm1aba} for smooth functions.

\begin{lem}\label{asymp}
Let $(\hat x,\hat t)\in \mathbb R^{n+1}$ and consider $u\in C^{2,1}(C_{2\delta}(\hat x,\hat t))\cap L^\infty(\mathbb R^{n+1})$. Then 	
\bgs{
    u(x,t)= \mathcal{M}_{\epsilon,-}^{s,\infty} u(x,t) + \kappa(1,s) \epsilon^{2s} \mathcal{H}^{s,-}_\infty u(x,t) + \mathcal O(\epsilon^{2})
    }
for all $(x,t)\in C_{\delta}(\hat x,\hat t)$ as $\epsilon\to 0$.
\end{lem}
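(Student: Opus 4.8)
The plan is to mimic the proof of Lemma \ref{FHeat_lemma_smooth}, but to handle the sup/inf over directions and the nonintegrable $|\eta|$ term carefully. Fix $(x,t)\in C_\delta(\hat x,\hat t)$ and $0<\epsilon<\delta$. As in Lemma \ref{FHeat_lemma_smooth}, a scaling computation gives $(\kappa(1,s))^{-1}=\epsilon^{2s}\iint_{(\mathbb R\times\mathbb R)\setminus C_\epsilon(0,0)}K_{1,s}(\eta,-\tau)\,\d\eta\d\tau$, so for any fixed pair $y,z\in S^n$,
\begin{align*}
u(x,t)-\mathcal{M}_{\epsilon,-}^{s,y,z}u(x,t)
=\tfrac12\kappa(1,s)\epsilon^{2s}\!\!\iint_{(\mathbb R\times\mathbb R)\setminus C_\epsilon(0,0)}\!\!\delta_\epsilon^{y,z}(u,(x,t),(\eta,\tau))\,K_{1,s}(\eta,-\tau)\,\d\eta\d\tau,
\end{align*}
where $\delta_\epsilon^{y,z}(u,(x,t),(\eta,\tau)):=2u(x,t)-u(x+\epsilon|\eta|y,t+\epsilon^2\tau)-u(x-\epsilon|\eta|z,t+\epsilon^2\tau)$. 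After rescaling $(\eta,\tau)\mapsto(\epsilon\eta,\epsilon^2\tau)$ exactly as in \eqref{Hs_smooth_u-M_delta}, the $\epsilon$-powers cancel and the integration domain becomes $(\mathbb R\times\mathbb R)\setminus C_1(0,0)$; adding and subtracting the integral over $C_1^-(0,0)$ expresses the right-hand side, up to $\tfrac12\kappa(1,s)$, as $2\epsilon^{2s}$ times the (formal) infinite-domain integral minus an error term $I_\epsilon^{y,z}$ supported on $C_1^-(0,0)$. The Taylor bound $|\delta_\epsilon^{y,z}(u,(x,t),(\eta,\tau))|\le c\epsilon^2(|\eta|^2+|\tau|)$ on $C_1(0,0)$ — valid uniformly in $y,z$ since the $C^{2,1}$ norm of $u$ on $C_{2\delta}(\hat x,\hat t)$ controls everything — together with the integrability established in \eqref{Opta}, gives $|I_\epsilon^{y,z}|\le \tilde c\epsilon^2$ uniformly in $y,z$.

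The subtlety is that the full-domain integral $\iint_{\mathbb R\times\mathbb R_-}\delta_\epsilon^{y,z}(u,(x,t),(\eta,\tau))K_{1,s}(\eta,-\tau)\,\d\eta\d\tau$ need not be absolutely convergent for general $y\neq z$: the heuristic in Section \ref{heuristic_derivation} shows it splits as a finite piece plus $\nabla_x u(x,t)\cdot(z-y)\cdot(+\infty)$. So I would split into the two cases of Definition \ref{IFLdefn+}. \emph{Case $\nabla_x u(x,t)\neq 0$:} choose $y=z=v:=\nabla_x u(x,t)/|\nabla_x u(x,t)|$; then $\mathcal{M}_{\epsilon,-}^{s,\infty}u=\mathcal{M}_{\epsilon,-}^{s,v,v}u$ by definition, the dangerous term has coefficient $z-y=0$, the remaining integral is finite and by definition equals $2\mathcal{H}^{s,-}_\infty u(x,t)$ (after dividing by $\kappa(1,s)\epsilon^{2s}$ and noting $\delta_\epsilon^{v,v}$ at scale $1$ reproduces the symmetrized integrand in Definition \ref{IFLdefn}), so the one-line argument of Lemma \ref{FHeat_lemma_smooth} closes the case. \emph{Case $\nabla_x u(x,t)=0$:} now for \emph{every} $y,z$ the integrand $\delta_\epsilon^{y,z}$ at scale $1$ is dominated, via Taylor near the origin (using $\nabla_x u(x,t)=0$ to kill the linear term so the bound is $c(|\eta|^2+|\tau|)$ there) and global boundedness of $u$ away from the origin, by a fixed $K_{1,s}$-integrable function; hence the infinite-domain integral is finite and, crucially, $\sup_y\inf_z$ commutes with the limiting operations because all error terms are $O(\epsilon^2)$ uniformly in $(y,z)$. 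Taking $\sup_{y}\inf_{z}$ of the identity $u(x,t)-\mathcal{M}_{\epsilon,-}^{s,y,z}u(x,t)=\tfrac12\kappa(1,s)(2\epsilon^{2s}J^{y,z}-I_\epsilon^{y,z})$, where $J^{y,z}$ is the convergent infinite-domain integral, and using uniform boundedness of $I_\epsilon^{y,z}$ to pull the $O(\epsilon^2)$ outside, yields $u(x,t)=\mathcal{M}_{\epsilon,-}^{s,\infty}u(x,t)+\kappa(1,s)\epsilon^{2s}\mathcal{H}^{s,-}_\infty u(x,t)+\mathcal O(\epsilon^2)$, since $\sup_y\inf_z J^{y,z}=\mathcal H^{s,-}_\infty u(x,t)$ by Definition \ref{IFLdefn}.

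The main obstacle I anticipate is the uniformity of the error estimates in the direction variables $y,z$, which is what licenses interchanging $\sup_y\inf_z$ with the expansion: one must check that the Taylor remainder bound on $C_1^-(0,0)$ and the bound on the far-field contribution depend only on $\|u\|_{C^{2,1}(C_{2\delta})}$ and $\|u\|_{L^\infty(\mathbb R^{n+1})}$, not on $y,z$. A secondary point is bookkeeping the difference between the integration region $(\mathbb R\times\mathbb R)\setminus C_\epsilon(0,0)$ appearing in $\mathcal M_{\epsilon,\pm}^{s,y}$ (full cylinder, both time directions) and $\mathbb R\times\mathbb R$ in $\mathcal H^{s,-}_\infty$: because $K_{1,s}(\eta,-\tau)$ is supported in $\tau<0$, only the backward half $C_\epsilon^-(0,0)$ actually contributes, so the two regions agree on the support of the kernel, and the argument from Lemma \ref{FHeat_lemma_smooth} transfers verbatim.
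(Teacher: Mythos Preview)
Your proposal is correct and follows essentially the same strategy as the paper: scale, split off the $C_1^-(0,0)$ piece, control it by a uniform-in-$(y,z)$ Taylor bound, and then treat the two cases $\nabla_x u(x,t)\neq 0$ and $\nabla_x u(x,t)=0$ separately.

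The one place your write-up differs from the paper is the $\nabla_x u(x,t)=0$ case. The paper separates $\mathcal{M}_{\epsilon,-}^{s,y,z}=\tfrac12\mathcal{M}_{\epsilon,-}^{s,y}+\tfrac12\mathcal{M}_{\epsilon,-}^{s,-z}$ from the outset and expands each summand on its own (so the $\sup_y$ and $\inf_z$ act on decoupled pieces), whereas you keep the combined identity and apply $\sup_y\inf_z$ to it. That is fine, but note that $\sup_y\inf_z\big[u(x,t)-\mathcal{M}_{\epsilon,-}^{s,y,z}u(x,t)\big]=u(x,t)-\inf_y\sup_z\mathcal{M}_{\epsilon,-}^{s,y,z}u(x,t)$, not $u(x,t)-\sup_y\inf_z\mathcal{M}_{\epsilon,-}^{s,y,z}u(x,t)$; you recover $\mathcal{M}_{\epsilon,-}^{s,\infty}u(x,t)$ only because $\mathcal{M}_{\epsilon,-}^{s,y,z}$ (and likewise $J^{y,z}$) is a sum of a function of $y$ alone and a function of $z$ alone, so $\sup_y\inf_z=\inf_y\sup_z$ here. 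Make that separability explicit and your argument is complete.
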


\begin{proof} Fix $(x,t)\in C_{\delta}(\hat x,\hat t)$, $y,z\in S^n$ and consider $0<\epsilon<\delta$.  As with the proof of Lemma \ref{FHeat_lemma_smooth} we have by the definition of $\kappa(1,s)$ in \eqref{eq1asymp2+},
\begin{align}\label{IFH_smooth_u-My}
	&{u(x,t)- \mathcal{M}_{\epsilon,-}^{s,y} u(x,t)}\notag\\
	=&\, \kappa(1,s)\epsilon^{2s}\iint_{\mathbb R^{2}\setminus C_\epsilon(0,0)} u(x,t) K_{1,s}(\eta,-\tau)\, \d\eta\d \tau- \mathcal{M}_{\epsilon,-}^{s,y} u(x,t)\notag\\
=& \kappa(1,s)\epsilon^{2s}\iint_{\mathbb R^{2}\setminus C_\epsilon(0,0)} {(u(x,t)-u(x+|\eta|y,t+\tau))}K_{1,s}(\eta,-\tau)\, \d \eta\d \tau,
\end{align}
and
\begin{align}\label{IFH_smooth_u-Mz}
	&{u(x,t)- \mathcal{M}_{\epsilon,-}^{s,-z} u(x,t)}\notag\\
	=&\, \kappa(1,s)\epsilon^{2s}\iint_{\mathbb R^2\setminus C_\epsilon(0,0)} u(x,t) K_{1,s}(\eta,-\tau)\, \d\eta\d \tau- \mathcal{M}_{\epsilon,-}^{s,-z} u(x,t)\notag\\
=& \kappa(1,s)\epsilon^{2s}\iint_{\mathbb R^{2}\setminus C_\epsilon(0,0)} {(u(x,t)-u(x-|\eta|z,t+\tau))}K_{1,s}(\eta,-\tau)\, \d \eta\d \tau.
\end{align}
We note in both cases, i.e., the cases $\nabla_xu(x,t)\neq 0$ and $\nabla_xu(x,t)= 0$, that we can write
\begin{align}\label{IFH_smooth_u-M_exp}
u(x,t)- \mathcal{M}_{\epsilon,-}^{s,y} u(x,t)&=\kappa(1,s)\epsilon^{2s}\iint_{\mathbb R^{2}} {(u(x,t)-u(x+|\eta|y,t+\tau))}K_{1,s}(\eta,-\tau)\, \d \eta\d \tau\notag\\
&-\kappa(1,s)\iint_{C_1(0,0)}(u(x,t)-u(x+\epsilon|\eta|y,t+\epsilon^2\tau))K_{1,s}(\eta,-\tau)\, \d \eta\d \tau,
\end{align}
as both integrals are finite. Furthermore,
\begin{align}\label{IFH_smooth_u-Myz}
	&{u(x,t)- \mathcal{M}_{\epsilon,-}^{s,y,z} u(x,t)}\notag\\
	=&\, \kappa(1,s)\epsilon^{2s}\iint_{\mathbb R^{2}\setminus C_\epsilon(0,0)} u(x,t) K_{1,s}(\eta,-\tau)\, \d\eta\d \tau- \mathcal{M}_{\epsilon,-}^{s,y,z} u(x,t)\notag\\
=& \frac{1}{2}\kappa(1,s)\epsilon^{2s}\iint_{\mathbb R^{2}\setminus C_\epsilon(0,0)} {(2u(x,t)-u(x+|\eta|y,t+\tau)-u(x-|\eta|z,t+\tau))}K_{1,s}(\eta,-\tau)\, \d \eta\d \tau.
\end{align}
By a change of variables, and introducing $$\delta_\epsilon^{y,z}(u,(x,t),(\eta,\tau)):=2u(x,t)-u(x+\epsilon|\eta|y,t+\epsilon^2\tau)-u(x-\epsilon|\eta|z,t+\epsilon^2\tau),$$ we have
\begin{align}\label{IFH_smooth_u-Myz_I_J}
	u(x,t)-\mathcal{M}_{\epsilon,-}^{s,y,z} u(x,t) = \frac{1}{2}\kappa(1,s)\epsilon^{2s}J_-^{s,y,z}(u,(x,t)) -\frac{1}{2}\kappa(1,s) I_{\epsilon,-}^{s,y,z} (u,(x,t)),
\end{align}
where
\begin{align*}
J_-^{s,y,z}(u,(x,t))&:=\iint_{\mathbb R^{2}} {(2u(x,t)-u(x+|\eta|y,t+\tau)-u(x-|\eta|z,t+\tau))}K_{1,s}(\eta,-\tau)\, \d \eta\d \tau,\\
I_{\epsilon,-}^{s,y,z} (u,(x,t))&:=\iint_{C_1(0,0)} {\delta_\epsilon^{y,z}(u,(x,t),(\eta,\tau))}K_{1,s}(\eta,-\tau)\, \d\eta\d \tau.
\end{align*}

Assume that $\nabla_xu(x,t)\neq 0$ and let  $y=z=v$, where $v=\nabla_x u(x,t)/|\nabla_x u(x,t)|\in S^{n}$. Then, using the regularity of $u$ in $C_{2\delta}(x,t)$, and a Taylor expansion, we deduce that
\begin{align}\label{IFH_smooth_abs-delta}
 |\delta_\epsilon^{y,z}(u,(x,t),(\eta,\tau))|&=|(2u(x,t)-u(x-\epsilon|\eta|v,t+\epsilon^2\tau)-u(x+\epsilon|\eta|v,t+\epsilon^2\tau))|\notag\\
 &\leq c\epsilon^2(|\eta v|^2+|\tau|),
\end{align}
and hence
\begin{align}\label{IFH_smooth_abs-I}
|I_{\epsilon,-}^{s,y,z} (u,(x,t))|\leq  c\epsilon^2\iint_{C_1(0,0)} (|\eta v|^2+|\tau|)K_{1,s}(\eta,-\tau)\, \d \eta\d \tau\leq \tilde{c}\epsilon^2.
\end{align}
Recalling from Definition \ref{IFLdefn+} that $\mathcal{M}_{\epsilon,-}^{s,\infty}u(x,t):=\mathcal{M}_{\epsilon,-}^{s,v,v}u(x,t)$ and noting that $\frac{1}{2}J_-^{s,y,z}(u,(x,t)) = \mathcal{H}_{\infty}^{s,-}$, we have proved, using the above estimates, that in the case $\nabla_xu(x,t)\neq 0$,
\begin{align}\label{IFH_smooth_grad-not0_end}
	u(x,t) = \mathcal{M}_{\epsilon,-}^{s,\infty}u(x,t) + \kappa(1,s)\epsilon^{2s}\mathcal{H}_{\infty}^{s,-}u(x,t) + \mathcal{O}(\epsilon^2),
\end{align}
as $\epsilon\to 0$.

Assume that $\nabla_xu(x,t)=0$. We have from (\ref{IFH_smooth_u-M_exp}) that
\begin{align}\label{IFH_smooth_u-My_L}
	&{u(x,t)- \mathcal{M}_{\epsilon,-}^{s,y} u(x,t)}\notag\\
	=&\, \kappa(1,s)\epsilon^{2s}\iint_{\mathbb R^{2}} {(u(x,t)-u(x+|\eta|y,t+\tau))}K_{1,s}(\eta,-\tau)\, \d \eta\d \tau - \kappa(1,s)L_{\epsilon,-}^{s,y} (u,(x,t)),
\end{align}
where
\begin{align*}
L_{\epsilon,-}^{s,y} (u,(x,t))&:=\iint_{C_1(0,0)}(u(x,t)-u(x+\epsilon|\eta|y,t+\epsilon^2\tau))K_{1,s}(\eta,-\tau)\, \d \eta\d \tau.
\end{align*}
Now,
$$|u(x,t)-u(x+\epsilon|\eta|y,t+\epsilon^2\tau)|\leq c\epsilon^2(|\tau|)$$
when $(\eta,\tau)\in C_1(0,0)$, hence
\begin{align}\label{IFH_smooth_abs-L}
|L_{\epsilon,-}^{s,y} (u,(x,t))|\leq  c\epsilon^2\iint_{C_1(0,0)} |\tau|K_{1,s}(\eta,-\tau)\, \d \eta\d \tau\leq \tilde{c}\epsilon^2,
\end{align}
and so,
\begin{align}\label{IFH_smooth_u-My_bigO}
&u(x,t)- \mathcal{M}_{\epsilon,-}^{s,y} u(x,t)\notag\\
=&\,\kappa(1,s)\epsilon^{2s}\iint_{\mathbb R^{2}} {(u(x,t)-u(x+|\eta|y,t+\tau))}K_{1,s}(\eta,-\tau)\, \d \eta\d \tau+\mathcal{O}(\epsilon^2)
\end{align}
for all $y\in S^{n}$. Similarly,
\begin{align}\label{IFH_smooth_u-infM}
&u(x,t)- \mathcal{M}_{\epsilon,-}^{s,-z} u(x,t)\notag\\
=&\,\kappa(1,s)\epsilon^{2s}\iint_{\mathbb R^{2}} {(u(x,t)-u(x-|\eta|z,t+\tau))}K_{1,s}(\eta,-\tau)\, \d \eta\d \tau+\mathcal{O}(\epsilon^2).
\end{align}
Consequently, recalling (\ref{eq1asymp1+ha}),
\begin{align*}
u(x,t)=& \mathcal{M}_{\epsilon,-}^{s,y,z} u(x,t)+\frac{1}{2}\kappa(1,s)\epsilon^{2s}\iint_{\mathbb R^{2}} {(u(x,t)-u(x+|\eta|y,t+\tau))}K_{1,s}(\eta,-\tau)\, \d \eta\d \tau\\
&+
\frac{1}{2}\kappa(1,s)\epsilon^{2s}\iint_{\mathbb R^{2}} {(u(x,t)-u(x-|\eta|z,t+\tau))}K_{1,s}(\eta,-\tau)\, \d \eta\d \tau+\mathcal{O}(\epsilon^2).
\end{align*}
First taking the infimum over all $z\in S^{n}$, and then taking the supremum over all $y\in S^{n}$, we have that
\begin{align}\label{IFH_smooth_u-supinfM}
	u(x,t) &= \sup_{y\in S^{n}}\inf_{z\in S^{n}}\mathcal{M}_{\epsilon,-}^{s,y,z} u(x,t)\notag\\
	&+\, \frac{1}{2}\kappa(1,s)\epsilon^{2s}\sup_{y\in S^{n}}\biggl\{\iint_{\mathbb R^{2}} {(u(x,t)-u(x+|\eta|y,t+\tau))}K_{1,s}(\eta,-\tau)\, \d \eta\d \tau\biggr \} \notag\\
	&+ \frac{1}{2}\kappa(1,s)\epsilon^{2s}\inf_{z\in S^{n}}\biggl\{\iint_{\mathbb R^{2}} {(u(x,t)-u(x-|\eta|z,t+\tau))}K_{1,s}(\eta,-\tau)\, \d \eta\d \tau\biggr \}+\mathcal{O}(\epsilon^2).
\end{align}
Finally, from Definitions \ref{IFLdefn} and \ref{IFLdefn+},
\begin{align}\label{IFH_smooth_grad-0_end}
u(x,t)=\mathcal{M}_{\epsilon,-}^{s,\infty}u(x,t) + \kappa(1,s)\epsilon^{2s}\mathcal{H}_\infty^{s,-}u(x,t) + \mathcal O(\epsilon^{2}),
\end{align}
which completes the proof of the lemma.
\end{proof}

\subsection{Proof of Theorem \ref{thm1aba}: the general case}
Let $(\hat x, \hat t)\in D$ be arbitrary and let $\delta>0$ be such that $U:=C_{2\delta}(\hat x, \hat t)$ satisfies $\overline {U}\subset D$. Let $\phi \in C^{2,1}(\overline{U})$ be such that
\begin{eqnarray*}
(1)&& u(\hat x, \hat t)=\phi(\hat x, \hat t),\notag\\
(2)&& u(x,t)>\phi(x,t)\mbox{ for all }(x,t)\in U\setminus\{\hat x, \hat t\}.
\end{eqnarray*}
We let $v:=v_{U,\phi,u}$  be defined as in \eqref{vvv}, hence $v\in C^{2,1}(U)\cap L^\infty(\mathbb R^{n+1})$.  By Lemma \ref{asymp}, we have that
\begin{eqnarray}\label{IFHeat_gen_bylemma}
v(\hat x,  t)= \mathcal{M}_{\epsilon,-}^{s,\infty}v( x,  t) +  \kappa(1,s) \epsilon^{2s} \mathcal{H}_\infty^{s,-}v( x,  t) + \mathcal O(\epsilon^{2}),\mbox{ as }\epsilon\to 0,
\end{eqnarray}
$(x, t)\in C_{\delta}( x, t)$ as $\epsilon\to 0$.

Assume that $u$ is a supersolution to $\mathcal{H}_\infty^{s,-}u(\hat x, \hat t)=0$ in the viscosity sense. Then
\[ \mathcal{H}_\infty^{s,-}v(\hat x, \hat t) \geq 0\]
implies that
\[v(\hat x, \hat t)\geq \mathcal{M}_{\epsilon,-}^{s,\infty}v(\hat x, \hat t)  + \mathcal O(\epsilon^{2}),\mbox{ as }\epsilon\to 0.
\]
Similarly, if $u$ is a subsolution to $\mathcal{H}_\infty^{s,-}u(\hat x, \hat t)=0$ in the viscosity sense, then
\[v(\hat x, \hat t)\leq \mathcal{M}_{\epsilon,-}^{s,\infty}v(\hat x, \hat t)  + \mathcal O(\epsilon^{2}),\mbox{ as }\epsilon\to 0.
\]
Since the choices of $(\hat x, \hat t)$ and $\phi$ are arbitrary, we have that $u$ satisfies (\ref{IFHeat_meanvalue3+ha}) in $D$ in the viscosity sense.

Assume that $u$ satisfies (\ref{IFHeat_meanvalue3+ha}) in the viscosity sense. Then, as
\[v(\hat x, \hat t)\geq \mathcal{M}_{\epsilon,-}^{s,\infty}v(\hat x, \hat t)+ \mathcal O (\epsilon^{2}),\]
it follows, by dividing by  $\kappa(1,s)\epsilon^{2s}$ in \eqref{IFHeat_gen_bylemma} and sending $\epsilon\to 0$, that
\[\mathcal{H}_\infty^{s,-}v(\hat x, \hat t)\geq 0.\]
Therefore, $u$ is a supersolution to $\mathcal{H}_\infty^{s,-}u(\hat x, \hat t)=0$ in the viscosity sense. Similar arguments show that $u$ is also a subsolution to $\mathcal{H}_\infty^{s,-}u(\hat x, \hat t)=0$ in the viscosity sense, which completes the proof.


\section{Asymptotic Properties as $s\nearrow1$}\label{sec6}

In this section, we provide some asymptotic properties on the operators $\mathcal{H}^s$ and $\mathcal{H}_\infty^{s,\pm}$. In particular, we study their limit behavior as $s$ approaches the upper bound $1$. Indeed, by sending $s\nearrow1$, we obtain the local counterpart of the operators under study.

\subsection{Limit of $\mathcal{H}^s$ as $s\nearrow 1$}

\begin{lem}\label{asymp1} Let $u\in C^{2,1}(D)\cap L^\infty(\mathbb R^{n+1})$. Then for all $(x,t)\in D$
\begin{align*}
     \lim_{s\nearrow 1}\mathcal{H}^s u(x,t)&=(\partial_t-\Delta_x) u(x,t).
          \end{align*}
\end{lem}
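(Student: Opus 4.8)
The plan is to start from the Balakrishnan-type representation \eqref{eq4-},
\[
\mathcal{H}^s u(x,t) = -\frac{s}{\Gamma(1-s)}\int_0^\infty \tau^{-1-s}\bigl(P_\tau u(x,t)-u(x,t)\bigr)\d\tau ,
\]
with $P_\tau u$ as in \eqref{eq3}, and to extract the limit $s\nearrow 1$ by splitting the $\tau$-integral into a region near the origin and a region away from it. Write $g(\tau):=P_\tau u(x,t)-u(x,t)$; since $u\in L^\infty(\R^{n+1})$ we have $|g(\tau)|\le 2\|u\|_{L^\infty}$ for all $\tau>0$, and since $u\in C^{2,1}(D)$ the integral converges (near $\tau=0$ the integrand behaves like $\tau^{-s}$ times a constant).

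\textbf{Step 1: short-time expansion of $g$.} Because $P_\tau u(x,t)=\int_{\R^n}W_n(x,\tau,w,0)u(w,t-\tau)\d w$ is the heat extension of $u(\cdot,t-\tau)$ and $W_n$ is the heat kernel, one has formally $\partial_\tau P_\tau u=\Delta_x P_\tau u-\partial_t P_\tau u=-(\partial_t-\Delta_x)P_\tau u$, so $g(0)=0$ and $g'(0)=-(\partial_t-\Delta_x)u(x,t)$. Rigorously, I would Taylor-expand $u$ at $(x,t)$: on $\{|w-x|\le\rho\}$ with $\tau\le\rho^2$ the second-order remainder is $o(|w-x|^2+\tau)$, and together with the moment identities $\int W_n\d w=1$, $\int W_n(w-x)\d w=0$, $\int W_n(w_i-x_i)(w_j-x_j)\d w=2\tau\delta_{ij}$ this contributes $-\tau(\partial_t-\Delta_x)u(x,t)+o(\tau)$; on the complementary set $\{|w-x|>\rho\}$ the Gaussian tail bound, combined with $\|u\|_{L^\infty}<\infty$, gives a contribution that is $o(\tau^k)$ for every $k$. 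Hence $g(\tau)=-\tau(\partial_t-\Delta_x)u(x,t)+r(\tau)$ with $r(\tau)=o(\tau)$ as $\tau\to0^+$.

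\textbf{Step 2: split and pass to the limit.} For a small fixed $\delta>0$ write $\int_0^\infty=\int_0^\delta+\int_\delta^\infty$. On $[\delta,\infty)$, $\bigl|\int_\delta^\infty\tau^{-1-s}g(\tau)\d\tau\bigr|\le 2\|u\|_{L^\infty}\delta^{-s}/s$ is bounded uniformly for $s$ near $1$, while the prefactor $\tfrac{s}{\Gamma(1-s)}=\tfrac{s(1-s)}{\Gamma(2-s)}\to0$, so this part disappears in the limit. On $(0,\delta)$, insert $g(\tau)=-\tau(\partial_t-\Delta_x)u(x,t)+r(\tau)$ and use $\int_0^\delta\tau^{-s}\d\tau=\delta^{1-s}/(1-s)$ together with $(1-s)\Gamma(1-s)=\Gamma(2-s)$: the main term becomes $\tfrac{s\,\delta^{1-s}}{\Gamma(2-s)}(\partial_t-\Delta_x)u(x,t)$, which tends to $(\partial_t-\Delta_x)u(x,t)$ since $\delta^{1-s}\to1$, $s\to1$, $\Gamma(2-s)\to1$; the remainder is bounded by $\tfrac{s\,\delta^{1-s}}{\Gamma(2-s)}\sup_{0<\tau\le\delta}\tfrac{|r(\tau)|}{\tau}$. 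Taking $\limsup_{s\nearrow1}$ gives $\limsup_{s\nearrow1}\bigl|\mathcal{H}^s u(x,t)-(\partial_t-\Delta_x)u(x,t)\bigr|\le\sup_{0<\tau\le\delta}\tfrac{|r(\tau)|}{\tau}$, and letting $\delta\to0$ (by Step 1) finishes the proof.

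\textbf{Main obstacle.} The delicate point is the near-origin piece: $\tfrac{s}{\Gamma(1-s)}$ vanishes only linearly in $(1-s)$ whereas $\int_0^\delta\tau^{-s}\d\tau$ blows up like $1/(1-s)$, so the two exactly cancel and one genuinely needs the $o(\tau)$ rate of $r(\tau)$ — continuity of $g$ or even $O(\tau)$ on a fixed interval is not enough. A secondary technical issue is making Step 1 uniform, i.e., controlling the far-field region where the pointwise Taylor expansion of $u$ is unavailable, which is exactly where global boundedness of $u$ and the Gaussian decay of $W_n$ are used.
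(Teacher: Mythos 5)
Your proof is correct, and it takes a genuinely different route from the paper's. The paper works directly from the hypersingular integral representation \eqref{eq1asymp+intr}: it decomposes the $(w,\tau)$-domain $\mathbb R^n\times\mathbb R_-$ into the three pieces $D_1=B(0,\epsilon)\times\mathbb R_-$, $D_2=\mathbb R^n\times(-\epsilon^2,0)$, $D_3=$ the rest, Taylor-expands $\delta(u,(x,t),(w,\tau))$ separately in the $w$- and $\tau$-directions, and evaluates the resulting Gaussian/Gamma integrals explicitly. You instead pass through the Balakrishnan representation \eqref{eq4-}, which collapses the whole problem to the one-dimensional integral $\int_0^\infty\tau^{-1-s}g(\tau)\d\tau$ with $g(\tau)=P_\tau u(x,t)-u(x,t)$, and you extract the limit from the short-time asymptotic $g(\tau)=-\tau(\partial_t-\Delta_x)u(x,t)+o(\tau)$, which is exactly the statement that $\partial_t-\Delta_x$ is (minus) the infinitesimal generator of the semigroup $P_\tau$. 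Both arguments hinge on precisely the same cancellation: the normalizing constant (your $\tfrac{s}{\Gamma(1-s)}$, the paper's $\Gamma(-s)^{-1}$) tends to $0$ linearly in $1-s$ while the near-origin piece of the integral blows up like $1/(1-s)$, and the residue of that cancellation is the local operator. Your route is shorter and more structural, and you correctly identify that $o(\tau)$ control of the remainder (not merely $O(\tau)$) is indispensable — this is the analogue, on the semigroup side, of the $\mathcal O(\epsilon^2)$ bounds the paper needs on its Taylor remainders in $D_1$ and $D_2$. What your route buys is avoidance of the explicit radial Gamma-function computations; what the paper's route buys is self-containment, since it never leaves the definition \eqref{eq1asymp+intr}. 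One point worth making explicit in your write-up: the identity between \eqref{eq1} and \eqref{eq4-} for $u\in C^{2,1}(D)\cap L^\infty(\mathbb R^{n+1})$ is a Fubini exchange, which is legitimate here because the iterated hypersingular integral is absolutely convergent at each $(x,t)\in D$ (a fact you establish anyway when you note that the integrand behaves like $\tau^{-s}$ near the origin and like $\tau^{-1-s}\cdot\|u\|_{L^\infty}$ at infinity).
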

\begin{proof} Given $u\in C^{2,1}(D)\cap L^\infty(\mathbb R^{n+1})$ and
$$\delta(u,(x,t),(w,\tau)):=2u(x,t)-u(x+w,t+\tau)-u(x-w,t+\tau),$$
we recall from (\ref{eq1asymp+intr}) the representation
\begin{align*}
     \mathcal{H}^s u(x,t)&=\frac 1 2 \iint_{\mathbb R^{n+1}} \delta(u,(x,t),(w,\tau))K_{n,s}(w,-\tau)\, \d w\d \tau \\
     &= \frac 1 2 \int_{\mathbb R^{n}\times\mathbb R_-}\delta(u,(x,t),(w,\tau))K_{n,s}(w,-\tau)\, \d w\d \tau=:\frac{1}{2}I_s(x,t).
     \end{align*}
 We fix $\eps\in(0,1)$, which we will take arbitrarily small in the sequel. Consider the following sub-domains:
 \begin{align*}
 D_1&:= B(0,\epsilon)\times \mathbb{R}_-,\\
 D_2&:=\mathbb{R}^n \times (-\epsilon^2,0) ,\\
 D_3&:=\mathbb{R}^n \times \mathbb{R}_- \setminus \{D_1\cup D_2\}.
 \end{align*}
 Note that $D_1\cap D_2 = C_{\epsilon}^-(0,0)$. We can then write
  \begin{align*}
 I_s(x,t) =&\iint_{D_1} \; + \iint_{D_2} \; +\iint_{D_3} \; -\iint_{D_1\cap D_2}.
 \end{align*}

 First, since $u$ is globally bounded, we have in $D_3$, for some constant $c>0$,
 \begin{align*}
 \iint_{D_3} \delta(u,(x,t),(w,\tau))K_{n,s}(w,-\tau)\, \d w\d \tau&\leq c \iint_{D_3} K_{n,s}(w,-\tau) \, \d w\d \tau\\
 &\leq -\frac{c \epsilon^{-2s} }{\Gamma(-s) (4\pi)^{\frac{n}{2}}}  \int_{||w||>1}\left(\int_0^{\infty} \tau^{-1-s-\frac{n}{2}} e^{-\frac{w^2}{4\tau}}\d\tau \right)\d w\\
 &=-\frac{c \epsilon^{-2s} 4^s \Gamma(\frac{n}{2}+s)|S^{n-1}|}{\Gamma(-s) (\pi)^{\frac{n}{2}}}  \int_1^{\infty} r^{-2s-1} dr\\
 &=\frac{cn \epsilon^{-2s} 4^{s-\frac{1}{2}} (1-s)\Gamma(\frac{n}{2}+s)}{\Gamma(2-s)\Gamma(\frac{n}{2}+1) }.
 \end{align*}
Sending $s\nearrow 1$, we see that this integral tends to $0$.
In $D_1$, we have
\begin{align*}
\delta(u,(x,t),(w,\tau))K_{n,s}(w,-\tau) =&(2u(x,t)-u(x+w,t)-u(x-w,t)+u(x+w,t)+u(x-w,t)\\
&-u(x+w,t+\tau)-u(x-w,t+\tau))K_{n,s}(w,-\tau)\\
=&\left(\langle\nabla_x^2u(x,t)w,w\rangle+ \mathcal{O}(\epsilon^2)\right)K_{n,s}(w,-\tau)+(u(x+w,t)\\
&+u(x-w,t)-u(x+w,t+\tau)-u(x-w,t+\tau))K_{n,s}(w,-\tau).
\end{align*}
Similarly, in $D_2$, we have
\begin{align*}
\delta(u,(x,t),(w,\tau))K_{n,s}(w,-\tau) = &\left(-2\partial_tu(x,t)\tau+ \mathcal{O}(\epsilon^2)\right)K_{n,s}(w,-\tau)\\
&+\left(2u(x,t)-u(x+w,t)-u(x-w,t)\right)K_{n,s}(w,-\tau).
\end{align*}
Therefore,
\begin{align*}
&  \iint_{D_1} \; + \iint_{D_2}  \;-\iint_{D_1\cap D_2} \ \\
=&  \iint_{D_1} \left(\langle\nabla_x^2u(x,t)w,w\rangle+ \mathcal{O}(\epsilon^2)\right)K_{n,s}(w,-\tau)\, \d w\d \tau\notag\\
& +\iint_{D_2}\left(-2\partial_tu(x,t)\tau+ \mathcal{O}(\epsilon^2)\right)K_{n,s}(w,-\tau)\, \d w\d \tau\\
&+ \iint_{D_1\setminus D_2} \left(u(x+w,t)+u(x-w,t)-u(x+w,t+\tau)-u(x-w,t+\tau)\right)K_{n,s}(w,-\tau)\, \d w\d \tau\\
&+\iint_{D_2\setminus D_1} \left(2u(x,t)-u(x+w,t)-u(x-w,t)\right)K_{n,s}(w,-\tau)\, \d w\d \tau.
\end{align*}
By the same argument as the integral in $D_3$, the last two integrals will tend to $0$ as $s\nearrow 1$ due to boundedness of $u$ outside of $D_1\cap D_2$. The first integral can be written as
\begin{align*}
&\iint_{D_1} \left(\langle\nabla_x^2u(x,t)w,w\rangle+ \mathcal{O}(\epsilon^2)\right)K_{n,s}(w,-\tau)\, \d w\d \tau\\
=&\iint_{B_{\epsilon}(0)\times \mathbb{R^-}}\langle\nabla_x^2u(x,t)w,w\rangle \frac{1}{(4\pi)^{\frac{n}{2}}\Gamma(-s)} \frac{1}{(-\tau)^{1+s+\frac{n}{2}}} e^{\frac{w^2}{4\tau}}\, \d w\d \tau+ \mathcal{O}(\epsilon^2)\\
=&\frac{\Delta_x u(x,t)}{n(4\pi)^{\frac{n}{2}}\Gamma(-s)} \iint_{B_{\epsilon}(0)\times \mathbb{R^+}} w^2\tau^{-(1+s+\frac{n}{2})} e^{-\frac{w^2}{4\tau}}\, \d w\d \tau+ \mathcal{O}(\epsilon^2)\\
=&\frac{4^{s}\Delta_x u(x,t)\Gamma(\frac{n}{2}+s)}{n(\pi)^{\frac{n}{2}}\Gamma(-s)} \epsilon^{2-2s}  \frac{2\pi^{\frac{n}{2}}}{\Gamma(\frac{n}{2})} \frac{1}{2(1-s)} + \mathcal{O}(\epsilon^2)\\
=&-\frac{ 4^s\Delta_x u(x,t)s\Gamma(\frac{n}{2}+s)}{n\Gamma(2-s)\Gamma(\frac{n}{2}) }\epsilon^{2-2s} + \mathcal{O}(\epsilon^2).
\end{align*}
 Letting  $s\nearrow1$, the above expression goes to $-2\Delta_x u(x,t) $. Similarly, the second integral can be written as
   \begin{align*}
   &\iint_{D_2}\left(-2\partial_tu(x,t)\tau+ \mathcal{O}(\epsilon^2)\right)K_{n,s}(w,-\tau)\, \d w\d \tau\\
=& -2\partial_tu(x,t)\iint_{D_2}\tau K_{n,s}(w,-\tau)\, \d w\d \tau+ \mathcal{O}(\epsilon^2)\\
=&-\frac{2\partial_tu(x,t) }{\Gamma(-s)} \epsilon^{2-2s} \int_0^1\tau^{-s}\d\tau+ \mathcal{O}(\epsilon^2)\\
=& \frac{2s \epsilon^{2-2s}}{\Gamma(2-s)}\partial_tu(x,t) + \mathcal{O}(\epsilon^2),
   \end{align*}
    which tends to $2\partial_tu(x,t)$ if we first let  $s\nearrow1$ then $\epsilon \to 0$. Therefore, $I _s(x,t)\to (\partial_t - \Delta_x )u(x,t)$ as $s\nearrow1$ and the conclusion follows.
\end{proof}

\subsection{Limit of $\mathcal{H}_\infty^{s,\pm}$ as $s\nearrow 1$}

\begin{lem}\label{asymp2} Let $u\in C^{2,1}(D)\cap L^\infty(\mathbb R^{n+1})$. Consider  $(x,t)\in D$ and assume that $\nabla_xu(x,t)\neq 0$.
\begin{align*}
     \lim_{s\nearrow 1}\mathcal{H}_\infty^{s,\pm}u(x,t)&=\partial_tu(x,t)\pm\Delta_{\infty,x}^N u(x,t).
          \end{align*}
\end{lem}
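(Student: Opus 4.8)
The plan is to reduce the statement to a one-dimensional computation along the gradient direction, paralleling the proof of Lemma~\ref{asymp1} but keeping only the direction $v=\nabla_x u(x,t)/|\nabla_x u(x,t)|$. Since $\nabla_x u(x,t)\neq 0$, Definition~\ref{IFLdefn} gives
\begin{align*}
\mathcal{H}_\infty^{s,\pm}u(x,t)=\frac12\iint_{\mathbb R\times\mathbb R}\bigl[2u(x,t)-u(x+|\eta|v,t+\tau)-u(x-|\eta|v,t+\tau)\bigr]\,K_{1,s}(\eta,\pm\tau)\d\eta\d\tau,
\end{align*}
and by the support properties of $K_{1,s}$ only $(\eta,\tau)$ with $\pm\tau<0$ (resp. $>0$ for the forward case) contribute. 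First I would introduce, exactly as in Lemma~\ref{asymp1}, a small parameter $\eps\in(0,1)$ and split the $(\eta,\tau)$-plane into the pieces $D_1:=(-\eps,\eps)\times\mathbb R_\mp$, $D_2:=\mathbb R\times(\mp\eps^2,0)$ (with the appropriate sign for $\pm$), and $D_3$ the complement of $D_1\cup D_2$, together with the overlap $D_1\cap D_2=C_\eps(0,0)$.

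Next I would estimate each piece. On $D_3$ and on $D_1\setminus D_2$, $D_2\setminus D_1$, global boundedness of $u$ together with the scaling of $K_{1,s}$ (with $n=1$) shows, just as in Lemma~\ref{asymp1}, that the contributions carry a prefactor $\eps^{-2s}(1-s)$ or similar, hence vanish as $s\nearrow1$. On the overlap region $D_1\cap D_2$ a Taylor expansion gives an $\mathcal O(\eps^2)$-error times $\iint_{C_\eps}K_{1,s}$, which is $\mathcal O(\eps^{2-2s})$ and hence negligible after sending $s\nearrow1$ then $\eps\to0$. The two main terms are: on $D_1$, the second-order-in-space part
\begin{align*}
\iint_{D_1}\langle\nabla_x^2u(x,t)v,v\rangle\,|\eta|^2\,K_{1,s}(\eta,-\tau)\d\eta\d\tau,
\end{align*}
which after the substitution already carried out in Lemma~\ref{asymp1} (now in one spatial dimension, so the factor is $\langle\nabla_x^2 u\,v,v\rangle$ rather than $\tfrac1n\Delta_x u$) produces $-\tfrac12\langle\nabla_x^2u(x,t)v,v\rangle\cdot(\text{const})\,\eps^{2-2s}$ converging to $-2\,\Delta_{\infty,x}^N u(x,t)$; and on $D_2$, the first-order-in-time part $-2\partial_t u(x,t)\iint_{D_2}\tau K_{1,s}$, which as in Lemma~\ref{asymp1} converges (after $s\nearrow1$, $\eps\to0$) to $2\partial_t u(x,t)$, with the opposite sign in the forward case $K_{1,s}(\eta,+\tau)$.

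Collecting the surviving terms and dividing by $2$ yields $\lim_{s\nearrow1}\mathcal{H}_\infty^{s,-}u(x,t)=\partial_tu(x,t)+\Delta_{\infty,x}^N u(x,t)$ for the backward operator and the sign-flipped statement for the forward one, which is exactly $\partial_t u\pm\Delta_{\infty,x}^N u$ since by definition $\Delta_{\infty,x}^N u(x,t)=|\nabla_x u|^{-2}\langle\nabla_x^2 u\,\nabla_x u,\nabla_x u\rangle=\langle\nabla_x^2 u\,v,v\rangle$. I expect no genuinely new obstacle beyond Lemma~\ref{asymp1}; the only points requiring care are (i) checking that the restriction to the single direction $v$ does not spoil the vanishing of the ``far'' terms — it does not, since those bounds used only $\|u\|_\infty$ and the kernel scaling, both direction-independent — and (ii) keeping the constants consistent so that the space term really reproduces $\Delta_{\infty,x}^N u$ with the same normalization as in \eqref{lim2}. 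The bookkeeping of the $\eps^{2-2s}\to$ finite constant limits and the interchange of the $s\nearrow1$ and $\eps\to0$ limits is the most delicate part, but it is handled verbatim as in the proof of Lemma~\ref{asymp1}.
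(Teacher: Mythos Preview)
Your approach is essentially identical to the paper's: both reduce the computation to the one-dimensional version of Lemma~\ref{asymp1} along the direction $v=\nabla_x u/|\nabla_x u|$, split the $(\eta,\tau)$-plane into $D_1,D_2,D_3$, discard the far pieces via global boundedness and the $(1-s)$ prefactor, and read off the two main contributions from the space and time Taylor expansions. The paper's proof is in fact just one displayed line invoking ``the same calculations as in Lemma~\ref{asymp1}'', so your level of detail actually exceeds it.

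One bookkeeping slip: your own intermediate results (space part $\to -2\Delta_{\infty,x}^N u$, time part $\to +2\partial_t u$ for the backward kernel) add up to $2\partial_t u - 2\Delta_{\infty,x}^N u$, so after dividing by $2$ the backward operator gives $\partial_t u - \Delta_{\infty,x}^N u$, not $\partial_t u + \Delta_{\infty,x}^N u$ as you wrote. This matches the ``$-$'' case of the lemma, and the sign flip for the forward kernel then produces the ``$+$'' case. So the argument is correct; only the final sign in your summary is transposed.
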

\begin{proof} If $\nabla_xu(x,t)\neq 0$, we have that
\begin{align}
\mathcal H^{s,\pm}_\infty u(x,t):=\frac{1}{2}\iint_{\mathbb R\times\mathbb R} [2u(x,t)-u(x+|\eta| v,t+\tau)-u(x-|\eta|v,t+\tau)]\,K_{1,s}(\eta,\pm\tau)\d\eta\d\tau
\notag
\end{align}
where $v=\nabla_xu(x,t)/|\nabla_xu(x,t)|\in S^{n}$. Following the same calculations as in Lemma \ref{asymp1} and using
$$\delta^v(u,(x,t),(\eta,\tau)):=2u(x,t)-u(x+|\eta|v,t+\tau)-u(x-|\eta|v,t+\tau),$$
we can derive
\begin{align*}
   \lim_{s\nearrow 1}\mathcal{H}_\infty^{s,-}u(x,t)&= \lim_{s\nearrow 1}\frac{1}{2}\iint_{\mathbb{R}\times \mathbb{R_-}}\delta^v(u,(x,t),(\eta,\tau))K_{1,s}(w,-\tau) d\eta d\tau\\
   &= \lim_{\epsilon \to 0} \lim_{s\nearrow 1}  \left(\frac{2s }{\Gamma(2-s)} \partial_tu(x,t) \epsilon^{2-2s}- \frac{s4^s \Gamma(s+\frac{1}{2}) }{\Gamma(2-s) \Gamma(\frac{1}{2})}\langle\nabla_x^2u(x,t)v,v\rangle  + \mathcal{O}(\epsilon^2) \right)\\
   &= \partial_tu(x,t)-\langle\nabla_x^2u(x,t)\frac{\nabla_x u(x,t)}{|\nabla_x u(x,t)|},\frac{\nabla_x u(x,t)}{|\nabla_x u(x,t)|}\rangle\\
  &= \partial_tu(x,t)-\Delta_{\infty,x}^N u(x,t).
\end{align*}
The asymptotics for $\mathcal{H}_\infty^{s,+}$ follows similarly.
\end{proof}

\section{Future research and open problems}\label{sec7}
In this paper, we have in particular introduced the forward and backward infinity fractional heat operators $\mathcal H^{s,\pm}_\infty$,    we have proved asymptotic mean value formulas for viscosity solutions to $\mathcal H^{s,\pm}_\infty u=0$, and we have established the asymptotic limits of $\mathcal H^{s,\pm}_\infty$ as $s\nearrow 1$. As previously mentioned, our analysis is just the beginning of what we hope are further developments on this topic. Below, we state a few  open problems for future research. \\

\noindent
{\bf Problem 1}: In \cite{bjor}, the authors study the existence, uniqueness, and regularity of both the Dirichlet problem and obstacle problem for the infinity fractional Laplacian. It is an interesting problem to attempt a similar analysis for the time-dependent operators $\mathcal H^{s,\pm}_\infty$.\\

\noindent{\bf Problem 2}: In \cite{her}, the authors study the stochastic representation of a certain class of space-time coupled operators. Based on their results, one could attempt to study the connection between $\mathcal H^{s,\pm}_\infty$ and CTRWs as well as corresponding Cauchy problems.\\

\noindent{\bf Problem 3}: In this paper, we have defined solution to $\mathcal H^{s,\pm}_\infty u=0$, for all $s\in(0,1)$, in the viscosity sense based on test functions which locally $C^{2,1}$ and globally bounded. It is an interesting open problem to prove regularity of solutions to $\mathcal H^{s,\pm}_\infty u=0$. If $u$ is globally non-negative, is there a  Harnack inequality? Are viscosity solutions locally H{\"o}lder continuous?

\nocite{*}
\bibliographystyle{abbrv}
\bibliography{Bibliography}

\end{document}